\newcommand{\then}{\Longrightarrow}
\newcommand{\inver}{{\rm inv}}
\newcommand{\inv}{{\rm inv}}
\newcommand{\Inv}{{\rm Inv}}
\newcommand{\Neg}{{\rm Neg}}
\newcommand{\Pos}{{\rm Pos}}
\newcommand{\n}{{\rm n}}
\newcommand{\M}{{\rm M}}
\newcommand{\nega}{{\rm neg}}
\newcommand{\D}{{\rm Des}}
\newcommand{\des}{{\rm des}}
\newcommand{\wdes}{{\rm {wdes}}}
\newcommand{\ZZ}{\mathbb{Z}}
\newtheorem{thm}{Theorem}[section]
\newtheorem{proposition}[thm]{Proposition}
\newtheorem{lemma}[thm]{Lemma}
\newtheorem{corollary}[thm]{Corollary}
\newtheorem{problem}[thm]{Problem}
\theoremstyle{definition}
\newtheorem{rem}[thm]{Remark}
\newtheorem{exa}[thm]{Example}
\newtheorem{defn}[thm]{Definition}
\newtheorem{conj}[thm]{Conjecture}
\newcommand{\een}{\end{enumerate}}
\newcommand{\blem}{\begin{lem}}
\newcommand{\elem}{\end{lem}}
\newcommand{\bcl}{\begin{cla}}
\newcommand{\ecl}{\end{cla}}
\newcommand{\ethm}{\end{thm}}
\newcommand{\bpr}{\begin{pro}}
\newcommand{\epr}{\end{pro}}
\newcommand{\bco}{\begin{cor}}
\newcommand{\eco}{\end{cor}}
\newcommand{\bcon}{\begin{conj}}
\newcommand{\econ}{\end{conj}}
\newcommand{\bde}{\begin{defn}}
\newcommand{\ede}{\end{defn}}
\newcommand{\bex}{\begin{exa}}
\newcommand{\eexa}{\end{exa}}
\newcommand{\bobs}{\begin{obs}}
\newcommand{\eobs}{\end{obs}}
\newcommand{\bexe}{\begin{exe}}
\newcommand{\eexe}{\end{exe}}
\newcommand{\finv}{{\rm finv}}
\newcommand{\hB}{{\widehat B}}
\newcommand{\hN}{{\widehat N}}
\begin{document}

\title{Flag Weak Order on Wreath Products}
\bibliographystyle{acm}

\author{Ron M.\ Adin}
\address{Department of Mathematics\\
Bar-Ilan University\\
52900 Ramat-Gan\\
Israel} \email{radin@math.biu.ac.il}

\author{Francesco Brenti}
\address{Dipartimento di Matematica, Universit\'{a} di Roma ``Tor
Vergata'', Via della Ricerca Scientifica, 00133 Roma, Italy}
\email{brenti@mat.uniroma2.it}

\author{Yuval Roichman}
\address{Department of Mathematics\\
Bar-Ilan University\\
52900 Ramat-Gan\\
Israel} \email{yuvalr@math.biu.ac.il}

\begin{abstract}
A generating set for the wreath product
$\ZZ_r\wr S_n$ which leads to a nicely behaved weak order is
presented, and properties of the resulting order are studied.
\end{abstract}

\date{submitted October 2, '11; revised September 23, '12}

\thanks{Work of the first and third authors was supported in part by an Internal Research Grant 
from the Office of the Rector, Bar-Ilan Universty}

\maketitle

\section{Introduction}

The weak order on a Coxeter group is a fundamental tool in the
study of the combinatorial structure of this group. A
natural problem is to give a ``correct definition" of a weak order
on the wreath product $G(r,n):=\ZZ_r\wr S_n$. The weak order on a
Coxeter group  is determined via the generating set of simple
reflections and the associated length
function.  
 In this paper we address the basic question:
Which generating set for the wreath product is the
counterpart of the set of simple reflections? Unfortunately, the
natural analogue --- the set of complex reflections --- 
does not lead to a nicely behaved partial order.
It will be shown that there is 
a generating set 
yielding an order on $G(r,n)$ with  properties analogous to those of
the weak order on 
$S_n = G(1,n)$: 
The resulting poset is a ranked by the Foata-Han flag inversion number; 
it is a self-dual
lattice; it has a Tits-type property; and
its intervals have the desired homotopy types. 
Finally, the associated M\"obius function and relevant generating functions will be computed.

\medskip

The rest of the paper is organized as follows. Necessary
preliminaries and notation are given in Section 2. For the sake of
a clarity, results are first stated and proved for 
the hyperoctahedral group $B_n=G(2,n)$: 
The generating set and corresponding presentation are described in Section 3, 
the flag weak order is defined in Section 4, 
and its properties are studied in Sections 4-6. 
The corresponding results for 
general $r$ are discussed in Section 7. 
Section 8 contains final remarks and open problems. 


\section{Preliminaries}\label{s.prelim}

Let $(W,S)$ be a Coxeter system; thus $W$ is a group with a set of generators
$S=\{s_0,s_1,\ldots,s_n\}$ and a presentation of the form
\[
W=\langle s_0, s_1, \ldots ,s_n \,|\, (s_i s_j)^{m_{ij}}=e \ (0 \le i \le j \le n) \rangle,
\]
where $m_{ij} =m_{ji} \in \{2,3,\ldots\} \cup \{\infty\}$ and
$m_{ii}=1$.

The {\it (right) weak order} $\leq$ on $W$  is the reflexive and
transitive closure of the relation
$$w \lessdot ws\ \ \ \ \  \Longleftrightarrow\ \ \ \ \  w\in W, \ \ \ \  s \in S\ \ \
\text{ and }\ \ \ \ell(w)+1=\ell(ws),$$ where $\ell(\cdot)$ is the
standard length function with respect to the Coxeter generating set $S$.
The left weak order 
is defined similarly, with $sw$ instead of $ws$.
For combinatorial and other properties of the weak order the reader is referred to~\cite{BB}.

Let $S_n$ be the symmetric group on the letters $[n]:=\{1,\dots,n\}$.
Recall that $S_n$ is a Coxeter group with
respect to the set of Coxeter generators $S:=\{s_i\,|\,1\le i\le
n-1\}$, where $s_i$ may be interpreted as the adjacent
transposition $(i,i+1)$.

For $\pi\in S_n$ let the {\em inversion set} be
$\Inv(\pi):=\{(i,j):\ i<j, \ \pi(i)>\pi(j)\}$, the {\em inversion
number} be $\inv(\pi):=\#\Inv(\pi)$, and the {\em descent set} be
$\D(\pi):=\{i\in [n-1]:\ \pi(i)>\pi(i+1)\}$. Recall the classical
combinatorial interpretations of the Coxeter length function and
of the (right) weak order~\cite[Cor. 1.5.2, Prop. 3.1.3]{BB}:
\begin{equation}\label{e.inv}
\ell(\pi) = \inv(\pi) = \inv(\pi^{-1}),\qquad
\pi \le \sigma \Longleftrightarrow \Inv(\pi^{-1}) \subseteq \Inv(\sigma^{-1}).
\end{equation}

Let $B_{n}$ be the group of all bijections $\sigma$ of the set
$[\pm n] := \{-n, \ldots, -1, 1, \ldots, n\}$ onto itself such that
\[
\sigma(-a) = -\sigma(a) \qquad(\forall a\in[\pm n]),
\]
with composition as the group operation. $B_n$ is 
known as the group of ``signed permutations'' on $[n]$, or as the
{\em hyperoctahedral group} of rank $n$. We identify $S_{n}$ as a
subgroup of $B_{n}$, and $B_{n}$ as a subgroup of $S_{2n}$, in the
natural ways.

For $\sigma \in B_{n}$  let $\Neg(\sigma) := \{ i \in [n]\,: \, \sigma(i)<0 \}$,
$\nega(\sigma) := \#\Neg(\sigma)$ and
$|\sigma| = [|\sigma(1)|, \ldots ,|\sigma(n)|] \in S_n$.

More generally, consider the wreath product $    = \ZZ_r \wr S_n$, where
$\ZZ_r$ is the (additive) cyclic group of order $r$:
\[
G(r,n) := \{g = ((c_1,\ldots,c_n), \sigma)\,|\,\,  
c_i\in\ZZ_r\,(\forall i),\, \sigma\in S_n\},  
\]
with the group operation
\[
((c_1,\ldots,c_n), \sigma) \cdot ((d_1,\ldots,d_n), \tau) :=
((c_{\tau(1)} + d_{1}, \ldots, c_{\tau(n)} + d_{n}), \sigma\tau).
\]
(This definition is slightly non-standard, and is chosen for compatibility with the case $r=2$; see below.)
The elements of $\ZZ_r\wr S_n$ may be interpreted as $r$--colored permutations,
i.e., bijections $g$ of the set $\ZZ_r \times [n]$ onto itself such that
\[
g(c,i) = (d,j) \Longrightarrow g(c+c',i) = (d+c',j) \qquad(\forall c, c', d \in \ZZ_r,\, i, j \in [n]).
\]
For example, $G(1,n)$ is naturally isomorphic to the symmetric group $S_n$
and $G(2,n)$ is isomorphic to the hyperoctahedral group $B_n$,
where $ ((c_1, \ldots ,c_n), \sigma) \in G(2,n)$ corresponds to the element $g \in B_n$
such that
\[
g(i) = (-1)^{c_i} \sigma(i) \qquad(\forall i \in [n]).
\]
Thus $\Neg(g) = \{i\,:\, c_i = 1\}$, a basic compatibility (for $r=2$) that
underlies the choice of group operation in $G(r,n)$ above.
Informally, this means that the colors (or signs) $c_i$ are attached
{\em before} the permutation $\sigma$ is applied.

In the special cases $r = 1, 2$, $G(r,n)$ is of course a Coxeter group.


\medskip

For an $r$-colored permutation $\pi=((c_1,\dots,c_n), \sigma) \in G(r,n)$
let $|\pi| := \sigma$ and $\n(\pi):= \sum\limits_{i=1}^n c_i \in \ZZ$,
where elements of $\ZZ_r$ are interpreted as
the corresponding elements of $\{0, \ldots, r-1\} \subseteq \ZZ$.
Note that, for $r=2$,\ $\n(\pi)=\nega(\pi)$.

\medskip

The classical inversion number on permutations has a counterpart for wreath products,
the {\em flag inversion number}. It was introduced by Foata and Han~\cite{FH1, FH} and
further investigated in~\cite{Fire, FR}.

\begin{defn}\label{finv}
The {\it flag inversion number} of an $r$-colored permutation $\pi \in G(r,n)$ is defined as
$$
\finv(\pi):=r\cdot\inv (|\pi|)+ \n(\pi).
$$
\end{defn}

\medskip

For a positive integer $m$ and an indeterminate $q$ denote
\[
[m]_q := \frac{q^m-1}{q-1}.
\]

\begin{proposition}\label{finv-gf}\cite[Theorem 7.4]{Fire}
For every $r$ and $n$
$$
\sum\limits_{\pi\in G(r,n)} q^{\finv(\pi)}=\prod\limits_{i=1}^n
[ri]_q.
$$
\end{proposition}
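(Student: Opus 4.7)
The plan is to exploit the additive decomposition $\finv(\pi) = r\cdot\inv(|\pi|) + \n(\pi)$ and the fact that the underlying permutation $\sigma = |\pi|$ and the color vector $(c_1,\ldots,c_n) \in \ZZ_r^n$ vary independently over $S_n$ and $\ZZ_r^n$ respectively. This immediately factors the generating function:
\[
\sum_{\pi\in G(r,n)} q^{\finv(\pi)}
= \left(\sum_{\sigma\in S_n} q^{r\cdot\inv(\sigma)}\right)\cdot
  \left(\sum_{(c_1,\ldots,c_n)\in\ZZ_r^n} q^{c_1+\cdots+c_n}\right).
\]

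For the second factor, since each $c_i$ independently ranges over $\{0,1,\ldots,r-1\}$, it is $(1+q+\cdots+q^{r-1})^n = [r]_q^n$. For the first factor, I would invoke the classical result $\sum_{\sigma\in S_n} q^{\inv(\sigma)} = \prod_{i=1}^n [i]_q$ (e.g.\ via the Lehmer-code bijection $S_n \to \prod_i \{0,1,\ldots,i-1\}$), applied with $q$ replaced by $q^r$, obtaining $\prod_{i=1}^n [i]_{q^r} = \prod_{i=1}^n \frac{q^{ri}-1}{q^r-1}$.

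Multiplying the two factors gives
\[
\prod_{i=1}^n \frac{q^{ri}-1}{q^r-1}\cdot\left(\frac{q^r-1}{q-1}\right)^n
= \prod_{i=1}^n \frac{q^{ri}-1}{q-1} = \prod_{i=1}^n [ri]_q,
\]
which is the desired formula.

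There is no real obstacle here: the key observation is that $\finv$ is designed precisely so that the inversion statistic on $|\pi|$ and the color sum statistic decouple, reducing the problem to the well-known $S_n$ identity and a trivial geometric sum. The only care needed is to confirm that $\n(\pi)$ and $\inv(|\pi|)$ genuinely depend on disjoint data (they do, by definition), so that the sum factors as a product.
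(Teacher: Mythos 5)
Your argument is correct. The paper itself does not prove this proposition --- it simply cites \cite[Theorem 7.4]{Fire} --- so there is no internal proof to compare against. Your factorization is the standard (and essentially only natural) route: since $\pi=((c_1,\ldots,c_n),\sigma)$ ranges over the full product $\ZZ_r^n\times S_n$ and $\finv(\pi)=r\cdot\inv(\sigma)+\sum_i c_i$ splits additively across the two factors, the generating function is the product $\bigl(\prod_{i=1}^n [i]_{q^r}\bigr)\cdot [r]_q^n=\prod_{i=1}^n [ri]_q$, exactly as you compute. The one point worth stating explicitly (which you do address at the end) is that $\inv(|\pi|)$ depends only on $\sigma$ and $\n(\pi)$ only on the color vector, so the double sum genuinely factors; with that noted, the proof is complete and self-contained, which is arguably an improvement over the bare citation in the paper.
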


\medskip

\section{Generators and Presentations} 

The {\em alternating subgroup} of a reflection group is the kernel of
the {\em sign homomorphism} which maps all the Coxeter generators (simple reflections) to $-1$.


\begin{proposition}\label{t.relation2}
The alternating subgroup of the hyperoctahedral group $B_n=G(2,n)$ is
isomorphic to the abstract group generated by $\{a_i\,:\, 1 \le i \le n-1\}$ with
defining relations
\[
a_i^4=1 \qquad (1 \le i \le n-1), \leqno(A1)
\]
\[
a_i a_j = a_j a_i \qquad (|i-j|>1), \leqno(A2)
\]
\[
a_i a_{i+1} a_i= a_{i+1} a_i a_{i+1} \qquad (1 \le i \le n-1)  \leqno(A3)
\]
and
\[
(a_i a_{i+1})^3=1 \qquad (1 \le i \le n-1). \leqno(A4)
\]
\end{proposition}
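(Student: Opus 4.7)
The plan is to realise the abstract generators $a_i$ by explicit signed permutations $\alpha_i$ inside the alternating subgroup of $B_n$, verify the four relations for these $\alpha_i$, show they generate the alternating subgroup, and finally match orders with the abstract group $G_n$ presented by (A1)--(A4). I would take $\alpha_i$ to be the signed permutation defined by $\alpha_i(i) = i+1$, $\alpha_i(i+1) = -i$, and $\alpha_i(j) = j$ for all other $j \in [\pm n]$; a direct sign count places $\alpha_i$ in the alternating subgroup. Each relation is then verified by a short calculation: $\alpha_i^2$ negates coordinates $i$ and $i+1$ and fixes the rest, so $\alpha_i$ has order four, giving (A1); the supports of $\alpha_i$ and $\alpha_j$ are disjoint for $|i-j|>1$, giving (A2); for (A3) and (A4), both $\alpha_i$ and $\alpha_{i+1}$ fix every letter outside $\{i,i+1,i+2\}$, so the identities reduce to a check on three letters, on which $\alpha_i\alpha_{i+1}$ acts as the positive $3$-cycle $(i,i+1,i+2)$.

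Next I would argue that the $\alpha_i$ generate the alternating subgroup. The squares $\alpha_i^2$ telescope to produce every element that negates an even number of coordinates while fixing the underlying permutation, and the products $\alpha_i\alpha_{i+1}$ are the adjacent $3$-cycles, which generate the ordinary alternating group $A_n\subseteq S_n$. Since every element of the alternating subgroup of $B_n$ factors uniquely as an even permutation followed by an even-weight sign pattern, these two classes jointly generate it; together with the relation check, the assignment $a_i\mapsto\alpha_i$ extends to a surjection $\phi\colon G_n\twoheadrightarrow A$, where $A$ denotes the alternating subgroup of $B_n$.

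What remains is the bound $|G_n|\le|A|=2^{n-1}n!$, which I would establish by induction on $n$. The base $n=2$ is immediate: (A1) alone makes $G_2\cong\ZZ/4\ZZ$, of order $4$. For the inductive step, set $H_n=\langle a_1,\ldots,a_{n-2}\rangle\le G_n$; since the generators of $H_n$ satisfy the defining relations of $G_{n-1}$, there is a surjection $G_{n-1}\twoheadrightarrow H_n$, and the inductive hypothesis gives $|H_n|\le 2^{n-2}(n-1)!$. One then exhibits an explicit set $T_n$ of at most $2n$ right-coset representatives of $H_n$ in $G_n$ --- mirroring the way the alternating subgroup of $B_n$ decomposes over its counterpart in $B_{n-1}$ --- and checks, using only (A1)--(A4), that $T_n a_i\subseteq H_n T_n$ for every generator $a_i$. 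This Schreier-type closure yields $|G_n|\le 2n\cdot|H_n|\le 2^{n-1}n!$, making $\phi$ an isomorphism.

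The delicate step is the final coset-closure verification: pushing the sign-flip content carried by $a_{n-1}^{\pm 1}$ past the remaining $a_j$ using only (A1)--(A4) requires careful bookkeeping, especially where the braid relation (A3) interacts with the cycle relation (A4). An alternative route is to invoke the Reidemeister--Schreier presentation of the kernel of the sign homomorphism for the Coxeter group $B_n$ and reconcile it with (A1)--(A4) via Tietze transformations, thereby bypassing explicit coset enumeration.
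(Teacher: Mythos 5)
Your first half---realising the abstract generators as explicit signed permutations, checking (A1)--(A4) on them, and deducing that $\phi\colon G_n\to A$ is onto---is essentially the paper's opening move (the paper sends $a_i\mapsto(i,i+1)(i,-i)$, the inverse of your $\alpha_i$, which is immaterial). But your justification of surjectivity contains an error: the alternating subgroup of $B_n$ is the kernel of $w\mapsto(-1)^{\ell(w)}$, i.e.\ the set of $w$ with $\mathrm{sgn}(|w|)=(-1)^{\nega(w)}$, and it is \emph{not} the set of products (even permutation)$\cdot$(even-weight sign pattern): that set has only $2^{n-2}n!$ elements, half of $|A|=2^{n-1}n!$, and does not even contain your generators $\alpha_i$, whose underlying permutation is a transposition and which carry a single sign. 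The conclusion is salvageable---the subgroup generated by the $\alpha_i$ contains $A_n\ltimes(\text{even-weight signs})$, which has index $2$ in $A$, together with $\alpha_1$, which lies outside it---but as written your argument establishes generation of the wrong (index-two) subgroup.

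The more serious gap is the order bound $|G_n|\le 2^{n-1}n!$, which is the entire content of injectivity. Your plan---induct on $n$, bound $|H_n|=|\langle a_1,\dots,a_{n-2}\rangle|$ via a surjection from $G_{n-1}$, and close a set $T_n$ of $2n$ coset representatives under right multiplication by generators---is a legitimate Todd--Coxeter-style strategy, but the closure verification $T_na_i\subseteq H_nT_n$ is precisely where the relations (A1)--(A4) must do all the work, and you explicitly defer it as ``delicate'' (the Reidemeister--Schreier alternative is likewise not carried out). Without that computation the proof is incomplete. The paper takes a different and self-contained route that avoids coset enumeration entirely: it shows from (A3) and (A4) that $\hN_n^+:=\langle a_1^2,\dots,a_{n-1}^2\rangle$ is a normal subgroup (via conjugation identities such as $a_{i+1}^{-1}a_i^2a_{i+1}=a_{i+1}^{-2}a_i^{-2}$), that it is commutative and generated by $n-1$ involutions, hence of order at most $2^{n-1}$, and that the quotient satisfies the Coxeter relations of $S_n$ and so has order at most $n!$. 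That argument is short, complete, and needs no induction; if you insist on the coset approach you must actually exhibit $T_n$ and verify closure, which in practice amounts to rediscovering this normal-subgroup structure.
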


\begin{proof}
Denote by $B_n^+$ the alternating subgroup of $B_n$,
and let $\hB_n^+$ be the abstract group with the above presentation.
Define a map $\phi$ from the {\em free group} generated by $a_1, \ldots, a_{n-1}$
to $B_n^+$ by
\[
\phi(a_i):= [1, \ldots, -(i+1), i, \ldots,n] \qquad (1\le i \le
n-1).
\]
Since $\phi(a_i) = (i,i+1)(i,-i)$ is a product of two
reflections in $B_n$, it indeed belongs to $B_n^+$. It is easy to
check that relations $(A1)-(A4)$ are satisfied when each $a_i$ is
replaced by $\phi(a_i)$. This therefore defines a group
homomorphism, which we again denote by $\phi$, from $\hB_n^+$ to
$B_n^+$. We shall show that it is actually an isomorphism.

Now, $B_n^+$ is generated by the set $\{(i,i+1)(1,-1)\,:\,1 \le i
\le n-1\}$; see, e.g., \cite[\S 5.1, Exercise 1]{Humphreys}. Since
$\phi(a_i)^2 = (i+1, -(i+1))(i, -i)$, it follows that
\[
\phi(a_i) \phi(a_{i-1})^2 \phi(a_{i-2})^2 \cdots \phi (a_1)^2  =
(i,i+1)(1,-1)
\]
for $1 \le i \le n-1$, and therefore $\phi : \hB_n^+ \to B_n^+$ is surjective.

It remains to show that $\phi$ is injective. Since it is surjective and
$\# B_n^+ = 2^{n-1} n!$, it suffices to show that $\# \hB_n^+ \le 2^{n-1} n!$.

Let $\hN_n^+$ be the subgroup of $\hB_n^+$ generated by $a_1^2, \ldots, a_{n-1}^2$.
We shall show that $\hN_n^+$ is a commutative normal subgroup of $\hB_n^+$.
Indeed, $(A4)$ can be written as
\[
a_i a_{i+1} a_i a_{i+1} a_i a_{i+1} = 1
\]
or, using $(A3)$, as
\[
a_i a_{i+1} a_i a_i a_{i+1} a_i = 1.
\]
Rearrangement gives
\[
a_i^2 a_{i+1} = a_{i+1}^{-1} a_i^{-2}
\]
or
\begin{equation}\label{e.phi1}
a_{i+1}^{-1} a_i^2 a_{i+1} = a_{i+1}^{-2} a_i^{-2} \in \hN_n^+.
\end{equation}
Similarly, $(A4)$ and $(A3)$ for $i-1$ imply
\[
a_{i-1}^{-1} a_i^2 a_{i-1} = a_{i-1}^{-2} a_i^{-2} \in \hN_n^+.
\]
Finally, by $(A2)$,
\[
a_{j}^{-1} a_i a_{j} = a_i \qquad(|i-j| > 1)
\]
so that
\[
a_{j}^{-1} a_i^2 a_{j} = a_i^{2} \in \hN_n^+ \qquad(|i-j| > 1).
\]
Thus $\hN_n^+$ is a normal subgroup of $\hB_n^+$.

Commutativity of $\hN_n^+$ is also easy:
(\ref{e.phi1}) and $(A1)$ imply that
\[
a_{i+1}^{-1} a_i^2 a_{i+1} = a_{i+1}^{-2} a_i^{2}
\]
or
\[
a_{i+1} a_i^2 a_{i+1} = a_i^{2},
\]
so that also
\[
a_{i+1}^{2} a_i^2 a_{i+1}^2 = a_{i+1} (a_{i+1} a_i^2 a_{i+1}) a_{i+1} = a_{i+1} a_i^2 a_{i+1} = a_i^{2}.
\]
Thus, again by $(A1)$,
\[
a_i^2 a_{i+1}^2 = a_{i+1}^{-2} a_i^{2} = a_{i+1}^{2} a_i^{2},
\]
i.e., $a_i^2$ and $a_{i+1}^2$ commute.
This is certainly also the case for $a_i^2$ and $a_j^2$ when $|i-j| > 1$, so
$\hN_n^+$ is commutative.

We can now wrap up the proof:
$\hN_n^+$ is a commutative group generated by the involutions $a_1^2, \ldots, a_{n-1}^2$.
Thus each element of $\hN_n^+$ can be written as a product $a_{i_1}^2 \cdots a_{i_k}^2$
for some $k \ge 0$ and $1\le i_1 < \ldots < i_k \le n-1$. In particular, $\# \hN_n^+ \le 2^{n-1}$.
Also, $\hN_n^+$ is a normal subgroup of $\hB_n^+$.
The quotient $\hB_n^+ / \hN_n^+$ is generated by ${\bar a}_i$, the cosets corresponding to
the generators $a_i$ of $\hB_n^+$ $(1 \le i \le n-1)$.
The ${\bar a}_i$ satisfy the same relations $(A2) - (A4)$ as the $a_i$, with $(A1)$ replaced by
\[
{\bar a}_i^2 = 1 \qquad (1 \le i \le n-1).
\]
These are exactly the Coxeter relations defining the symmetric group $S_n$
(actually, $(A3)$ is now equivalent to $(A4)$), so that $\hB_n^+ / \hN_n^+$ is
a homomorphic image of $S_n$, and in particular $\# (\hB_n^+ / \hN_n^+) \le n!$.
All in all, $\# \hB_n^+ \le 2^{n-1}n!$ as required.

\end{proof}

\bigskip

The above presentation may be extended to the whole group
$B_n=G(2,n)$. 

\begin{proposition}\label{t.relation1}
The hyperoctahedral group $B_n=G(2,n)$ is isomorphic to the abstract group
generated by $S_{2,n}:=\{a_i\,:\, 1 \le i \le n-1\}\cup \{b_i\,:\, 1 \le i \le n\}$
with defining relations
\[
b_i^2=1 \qquad (1 \le i \le n), \leqno(B1)
\]
\[
b_ib_j=b_jb_i \qquad (1 \le i< j \le n), \leqno(B2)
\]
\[
a_i^2=b_i b_{i+1} \qquad (1 \le i \le n-1), \leqno(B3)
\]
\[
a_i a_j = a_j a_i \qquad (|i-j|>1), \leqno(B4)
\]
\[
a_i a_{i+1} a_i= a_{i+1} a_i a_{i+1} \qquad (1 \le i \le n-1), \leqno(B5)
\]
\[
a_i b_j =b_j a_i \qquad (j \ne i, i+1), \leqno(B6)
\]
\[
a_i b_i= b_{i+1} a_i \qquad(1 \le i \le n-1) \leqno(B7)
\]
and
\[
a_i b_{i+1}= b_i a_i \qquad(1 \le i \le n-1). \leqno(B8)
\]

\end{proposition}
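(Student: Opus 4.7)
The plan is to mirror the strategy used for Proposition \ref{t.relation2}: exhibit a surjective homomorphism from the abstract group $\hB_n$ presented by $S_{2,n}$ and relations $(B1)$--$(B8)$ onto $B_n=G(2,n)$, and then show that $\#\hB_n \le \#B_n = 2^n n!$ in order to force injectivity.

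First I would define $\phi$ on generators by
\[
\phi(a_i) := [1, \ldots, i-1, -(i+1), i, i+2, \ldots, n], \qquad \phi(b_j) := [1, \ldots, j-1, -j, j+1, \ldots, n],
\]
which is consistent with the map used in Proposition \ref{t.relation2}, and verify routinely that the eight families of relations hold under $\phi$. This gives a well-defined homomorphism $\phi : \hB_n \to B_n$. Surjectivity is then immediate: the $\phi(b_j)$ supply all sign-changes, and combining them with the $\phi(a_i)$ produces the adjacent transpositions (for instance $\phi(a_i)\phi(b_i) = [1,\ldots,i-1,i+1,i,i+2,\ldots,n]$), so the image contains a standard Coxeter generating set for $B_n$.

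The substantive step is the bound $\#\hB_n \le 2^n n!$, which I would establish via a normal subgroup / quotient argument. Let $\hN_n := \langle b_1, \ldots, b_n \rangle \subseteq \hB_n$. By $(B1)$ and $(B2)$, $\hN_n$ is abelian of exponent $2$, generated by $n$ involutions, so $\#\hN_n \le 2^n$. Relations $(B6)$--$(B8)$ state precisely that conjugation by each $a_i$ permutes $\{b_1, \ldots, b_n\}$ (swapping $b_i$ and $b_{i+1}$ and fixing the rest), so $\hN_n$ is normal in $\hB_n$. In the quotient $\hB_n/\hN_n$, the cosets $\bar a_i$ inherit the braid/commutation relations $(B4)$, $(B5)$, while $(B3)$ forces $\bar a_i^2 = \overline{b_i b_{i+1}} = \bar 1$; these are exactly the Coxeter relations for $S_n$, so $\#(\hB_n/\hN_n) \le n!$. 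Multiplying, $\#\hB_n \le 2^n n!$, and surjectivity of $\phi$ upgrades to bijectivity.

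I do not anticipate a genuine obstacle: the bulk of the effort is the mechanical verification of the eight relations under $\phi$, and the normal subgroup argument is short once one invokes $(B6)$--$(B8)$ for normality and $(B3)$ for killing $\bar a_i^2$ in the quotient. The conceptually pleasant point, paralleling the role of $(A4)$ in the alternating case, is that relation $(B3)$ is precisely the ingredient that causes the quotient presentation to collapse onto the standard Coxeter presentation of $S_n$.
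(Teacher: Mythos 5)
Your proposal is correct and follows essentially the same route as the paper: the same map $\phi$ on generators, the same surjectivity argument via $B_n = \ZZ_2^n \rtimes S_n$, and the same counting argument with the normal abelian subgroup $\hN_n = \langle b_1,\ldots,b_n\rangle$ and the quotient collapsing onto the Coxeter presentation of $S_n$. No substantive differences.
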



\smallskip

\begin{rem}
Note that relations $(A1)-(A4)$ in Proposition~\ref{t.relation2}
follow from relations $(B1)-(B8)$ in Proposition~\ref{t.relation1}.
Relation $(A1)$ follows from relations $(B1)$, $(B2)$ and $(B3)$.
Relations $(A2)-(A3)$ are relations $(B4)-(B5)$.
Finally, relation $(A4)$ follows from relations $(B1)$, $(B3)$, $(B5)-(B8)$
as follows:
\begin{eqnarray*}
(a_i a_{i+1})^3
& = & (a_i a_{i+1} a_i)(a_{i+1} a_i a_{i+1})
\ \,\, = \ \ (a_i a_{i+1} a_i)(a_i a_{i+1} a_i) \\
& = & a_i a_{i+1} b_i b_{i+1} a_{i+1} a_i
\ \ \ \ \ \ \ \, = \ \ b_{i+1} a_i a_{i+1}  a_{i+1} a_i b_{i+2}\\
& = & b_{i+1} a_i b_{i+1} b_{i+2} a_i b_{i+2}
\ \ \ \ \ = \ \ b_{i+1} b_i a_i a_i b_{i+2} b_{i+2}\\
& = & b_{i+1} b_i b_i b_{i+1} b_{i+2} b_{i+2}
\ \ \ \ \ \, =\ \ 1.
\end{eqnarray*}
\end{rem}

\begin{proof}
Similar to the proof of Proposition~\ref{t.relation2} (and somewhat simpler).

Let $\hB_n$ be the abstract group with the presentation described in Proposition~\ref{t.relation1}.
Define a map $\phi$ from the free group generated by $a_1, \ldots, a_{n-1}, b_1, \ldots, b_n$
to $B_n$ by
\[
\phi(a_i):= [1, \ldots, -(i+1),  i, \ldots,n] \qquad (1 \le i \le
n-1).
\]
and
$$
\phi(b_i):=[1,\ldots, - i,\ldots, n] \qquad (1 \le i \le n).
$$
Thus $\phi(a_i) = (i,i+1)(i,-i)$ and $\phi(b_i) = (i,-i)$. It is
easy to check that relations $(B1)-(B8)$ are satisfied when each
$a_i$ ($b_i$) is replaced by $\phi(a_i)$ ($\phi(b_i)$,
respectively). This therefore defines a group homomorphism, which
we again denote by $\phi$, from $\hB_n$ to $B_n$. We shall show
that it is actually an isomorphism.

Clearly, $\{\phi(a_i)\phi(b_i)\,:\, 1 \le i \le n-1\}$ is the set
of Coxeter generators for the symmetric group $S_n$, embedded
naturally into $B_n$. Similarly for $\{\phi(b_i)\,:\, 1 \le i \le
n\}$ and $\ZZ_2^n$. Since $B_n = \ZZ_2^n \rtimes S_n$, it follows
that $\{\phi(a_i)\,:\, 1 \le i \le n-1\}\cup \{\phi(b_i)\,:\, 1
\le i \le n\})$ generates $B_n$. Thus $\phi$ is surjective and, in
particular,
\[
\# \hB_n \ge \# B_n.
\]

It remains to show that $\phi$ is injective. Since it is surjective and
$\# B_n = 2^{n} n!$, it suffices to show that $\# \hB_n \le 2^{n} n!$.

Let $\hN_n$ be the subgroup of $\hB_n$ generated by $b_1, \ldots, b_{n}$.
We shall show that $\hN_n$ is a commutative normal subgroup of $\hB_n$.
Commutativity follows from $(B2)$,
while normality follows from $(B6) - (B8)$ which may be written as
\[
a_i^{-1} b_j a_i =b_j \qquad (j \ne i, i+1),
\]
\[
a_i^{-1} b_{i+1} a_i = b_i \qquad(1 \le i \le n-1)
\]
and
\[
a_i^{-1} b_i a_i = b_{i+1} \qquad(1 \le i \le n-1).
\]

We can now wrap up the proof:
$\hN_n$ is a commutative group generated by the involutions $b_1, \ldots, b_{n}$.
Thus each element of $\hN_n$ can be written as a product $b_{i_1} \cdots b_{i_k}$
for some $k \ge 0$ and $1\le i_1 < \ldots < i_k \le n$. In particular, $\# \hN_n \le 2^{n}$.
Also, $\hN_n$ is a normal subgroup of $\hB_n$.
The quotient $\hB_n / \hN_n$ is generated by ${\bar a}_i$, the cosets corresponding to
the generators $a_i$ of $\hB_n$ $(1 \le i \le n-1)$.
The ${\bar a}_i$ satisfy relations $(B4) - (B5)$, with $(B3)$ replaced by
\[
{\bar a}_i^2 = 1 \qquad (1 \le i \le n-1).
\]
These are exactly the Coxeter relations defining the symmetric group $S_n$,
so that $\hB_n / \hN_n$ is a homomorphic image of $S_n$,
and in particular $\# (\hB_n / \hN_n) \le n!$.
All in all, $\# \hB_n \le 2^{n}n!$ as required.

\end{proof}

\section{Flag Weak Order} 

From now on we identify the abstract generating set of $B_n$
$$
S_{2,n}=\{a_i:\ 1\le
i< n\}\cup \{b_i:\ 1\le i\le n\}$$ with the choice 
$$a_i:=[1,\dots, i-1,-(i+1),i,i+2,\dots,n]$$ and
$$b_i:=[1,\dots,i-1,-i,i+1,\dots,n],$$ used in the proof of
Proposition~\ref{t.relation1}.

\bigskip

Following  Foata and Han~\cite{FH1, FH}, let the {\em flag inversion number} of 
$\pi\in B_n$ be
$$
\finv(\pi):=2\cdot\inver (|\pi|)+ \nega(\pi).
$$

\begin{defn}\label{weak-definition-B_n}
The  {\em flag (right) weak order} $\preceq$ on $B_n$ is the
reflexive and transitive closure of the relation
$$\pi  \lessdot 
\pi s\ \ \ \ \  \Longleftrightarrow\ \ \ \ \  \pi\in G(2,n), s \in
S_{2,n}\ \ \ \text{ and }\ \ \ \finv(\pi)<\finv(\pi s).$$
\end{defn}



Note that this order is not isomorphic to the classical weak order on $B_n$.

\begin{figure}[htb]
\begin{center}

\begin{tikzpicture}[scale=0.7]
\fill (1,4) circle (0.1) node[above]{$\bar 2 \bar 1$};

\fill (2,3) circle (0.1) node[right]{$\bar 2 1$};

\fill (0,3) circle (0.1) node[left]{$2\bar 1$};

\fill (2,2) circle (0.1) node[right]{$21$};

\fill (0,2) circle (0.1) node[left]{$\bar 1 \bar 2$};

\fill (2,1) circle (0.1) node[right]{$1 \bar 2$};

\fill (0,1) circle (0.1) node[left]{$\bar 1 2$};

\fill (1,0) circle (0.1) node[below]{$12$};

\draw (2,1)--(1,0)--(0,1)--(0,2)--(2,1);

\draw (2,2)--(2,3)--(1,4)--(0,3)--(2,2);

\draw[red] (2,1)--(2,2);

\draw[red] (0,2)--(0,3);

\end{tikzpicture}
\caption{\label{fig:B2} The Hasse diagram of the flag weak order
on $B_2$. Edges are colored black for $b_i$-s and red for $a_i$-s.}
\end{center}
\end{figure}

\begin{figure}[htb]
\begin{center}

\begin{tikzpicture}[scale=0.7]

\fill (4,13.5) circle (0.1) node[above]{\small $\bar 3\bar 2 \bar 1$};

\fill (6,12) circle (0.1) node[right]{\small $\bar 3 \bar 2 1$};

\fill (4,12) circle (0.1) node[left]{\small $\bar 3 2 \bar 1$};

\fill (2,12) circle (0.1) node[left]{\small $3 \bar 2 \bar 1$};

\fill (6,10.5) circle (0.1) node[right]{\small $\bar 3 2 1$};

\fill (8,10.5) circle (0.1) node[right]{\small $\bar 3 \bar 1\bar 2$};

\fill (4,10.5) circle (0.1) node[right]{\small $3 \bar 2 1$};

\fill (0,10.5) circle (0.1) node[left]{\small $\bar 2 \bar 3 \bar 1$};

\fill (2,10.5) circle (0.1) node[left]{\small $ 3 2 \bar 1$};

\fill (10,9) circle (0.1) node[right]{\small $\bar 3 \bar 1 2$};

\fill (8,9) circle (0.1) node[right]{\small $\bar 3 1 \bar 2$}; 

\fill (6,9) circle (0.1) node[right]{\small $3 \bar 1 \bar 2$};

\fill (4,9) circle (0.1) node[right]{\small $3 2 1$};

\fill (-2,9) circle (0.1) node[left]{\small $ 2 \bar 3 \bar 1  $};

\fill (0,9) circle (0.1) node[left]{\small $ \bar 2  3  \bar 1$};

\fill (2,9) circle (0.1) node[left]{\small $\bar 2 \bar 3  1$};

\fill (12,7.5) circle (0.1) node[right]{\small $\bar 1 \bar 3 \bar 2$};

\fill (10,7.5) circle (0.1) node[right]{\small $\bar 3 1 2$};

\fill (8,7.5) circle (0.1) node[right]{\small $3 \bar 1 2$};

\fill (6,7.5) circle (0.1) node[right]{\small $3 1 \bar 2$};

\fill (2,7.5) circle (0.1) node[left]{\small $\bar 2 3 1$};

\fill (0,7.5) circle (0.1) node[left]{\small $2 \bar 3 1$};

\fill (-2,7.5) circle (0.1) node[left]{\small $2 3 \bar 1$};

\fill (4,7.5) circle (0.1) node[left]{\small $\bar 2 \bar 1 \bar 3$};

\fill (14,6) circle (0.1) node[right]{\small $1 \bar 3 \bar 2$};

\fill (12,6) circle (0.1) node[right]{\small $\bar 1 3 \bar 2$};

\fill (10,6) circle (0.1) node[right]{\small $\bar 1 \bar 3 2$};

\fill (8,6) circle (0.1) node[right]{\small $3 1 2$};

\fill (6,6) circle (0.1) node[left]{\small $2 \bar 1 \bar 3$};

\fill (4,6) circle (0.1) node[left]{\small $\bar 2 1 \bar 3$};

\fill (2,6) circle (0.1) node[left]{\small $\bar 2 \bar 1 3$};

\fill (0,6) circle (0.1) node[left]{\small $2 3 1$};

\fill (14,4.5) circle (0.1) node[right]{\small $1 3 \bar 2$};

\fill (12,4.5) circle (0.1) node[right]{\small $1 \bar 3 2$};

\fill (10,4.5) circle (0.1) node[right]{\small $\bar 1 3 2$};

\fill (8,4.5) circle (0.1) node[left]{\small $\bar 1 \bar 2 \bar 3$};

\fill (6,4.5) circle (0.1) node[left]{\small $2 1 \bar 3$};

\fill (4,4.5) circle (0.1) node[left]{\small $2 \bar 1 3$};

\fill (2,4.5) circle (0.1) node[left]{\small $\bar 2 1 3$};

\fill (12,3) circle (0.1) node[right]{\small $1 3 2$};

\fill (10,3) circle (0.1) node[right]{\small $1 \bar 2 \bar 3$};

\fill (8,3) circle (0.1) node[left]{\small $\bar 1 2 \bar 3$};

\fill (6,3) circle (0.1) node[left]{\small $\bar 1 \bar 2 3$};

\fill (4,3) circle (0.1) node[left]{\small $2 1 3$};

\fill (10,1.5) circle (0.1) node[right]{\small $1 2 \bar 3$};

\fill (8,1.5) circle (0.1) node[right]{\small $1 \bar 2 3$};

\fill (6,1.5) circle (0.1) node[left]{\small $\bar 1 2 3$};

\fill (8,0) circle (0.1) node[below]{\small $123$};

\draw[red] (8,1.5)--(4,3);

\draw[red] (10,1.5)--(12,3);

\draw[red] (10,3)--(14,4.5);

\draw[red] (2,10.5)--(-2,9);

\draw[red] (10,3)--(6,4.5);

\draw[red] (6,3)--(4,4.5);  

\draw[red] (8,3)--(10,4.5);

\draw[red] (4,10.5)--(2,9);

\draw[red] (6,4.5)--(0,6);

\draw[red] (6,9)--(12,7.5);

\draw[red] (8,4.5)--(12,6);

\draw[red] (8,4.5)--(6,6);

\draw[red] (4,7.5)--(0,9);

\draw[red] (8,6)--(12,4.5);

\draw[red] (10,6)--(8,7.5);

\draw[red] (4,6)--(2,7.5);

\draw[red] (6,6)--(-2,7.5);

\draw[red] (6,7.5)--(14,6);


\draw[red] (6,10.5)--(8,9);     


\draw[red] (4,9)--(0,7.5);

\draw[red] (6,7.5)--(4,9);

\draw[red] (6,9)--(2,10.5);

\draw[red] (4,12)--(8,10.5);

\draw[red] (2,12)--(0,10.5);

\draw (2,12)--(4,13.5)--(4,12)--(2,10.5)--(2,12);

\draw (4,13.5)--(6,12)--(6,10.5)--(4,12);

\draw (2,12)--(4,10.5)--(6,12);

\draw (2,10.5)--(4,9)--(6,10.5);

\draw (4,10.5)--(4,9);

\draw (4,3)--(2,4.5)--(2,6)--(4,7.5)--(6,6)--(6,4.5)--(4,3);

\draw (4,3)--(4,4.5);

\draw (2,4.5)--(4,6)--(6,4.5);

\draw (2,6)--(4,4.5)--(6,6);

\draw (4,6)--(4,7.5);

\draw (12,3)--(10,4.5)--(10,6)--(12,7.5)--(14,6)--(14,4.5)--(12,3);

\draw (12,3)--(12,4.5);

\draw (10,4.5)--(12,6)--(14,4.5);

\draw (10,6)--(12,4.5)--(14,6);

\draw (12,6)--(12,7.5);

\draw (0,6)--(-2,7.5)--(-2,9)--(0,10.5)--(2,9)--(2,7.5)--(0,6);

\draw (0,6)--(0,7.5);

\draw (-2,7.5)--(0,9)--(2,7.5);

\draw (-2,9)--(0,7.5)--(2,9);

\draw (0,9)--(0,10.5);

\draw (8,6)--(6,7.5)--(6,9)--(8,10.5)--(10,9)--(10,7.5)--(8,6);

\draw (8,6)--(8,7.5);

\draw (6,7.5)--(8,9)--(10,7.5);

\draw (6,9)--(8,7.5)--(10,9);

\draw (8,9)--(8,10.5);


\draw (6,1.5)--(8,0)--(8,1.5)--(6,3)--(6,1.5);

\draw (8,0)--(10,1.5)--(10,3)--(8,1.5);

\draw (6,1.5)--(8,3)--(10,1.5);

\draw (6,3)--(8,4.5)--(10,3);

\draw (8,3)--(8,4.5);



\end{tikzpicture}
\caption{\label{fig:B3} The Hasse diagram of the flag weak order
on $B_3$. Edges are colored black for $b_i$-s and red for $a_i$-s.}
\end{center}
\end{figure}
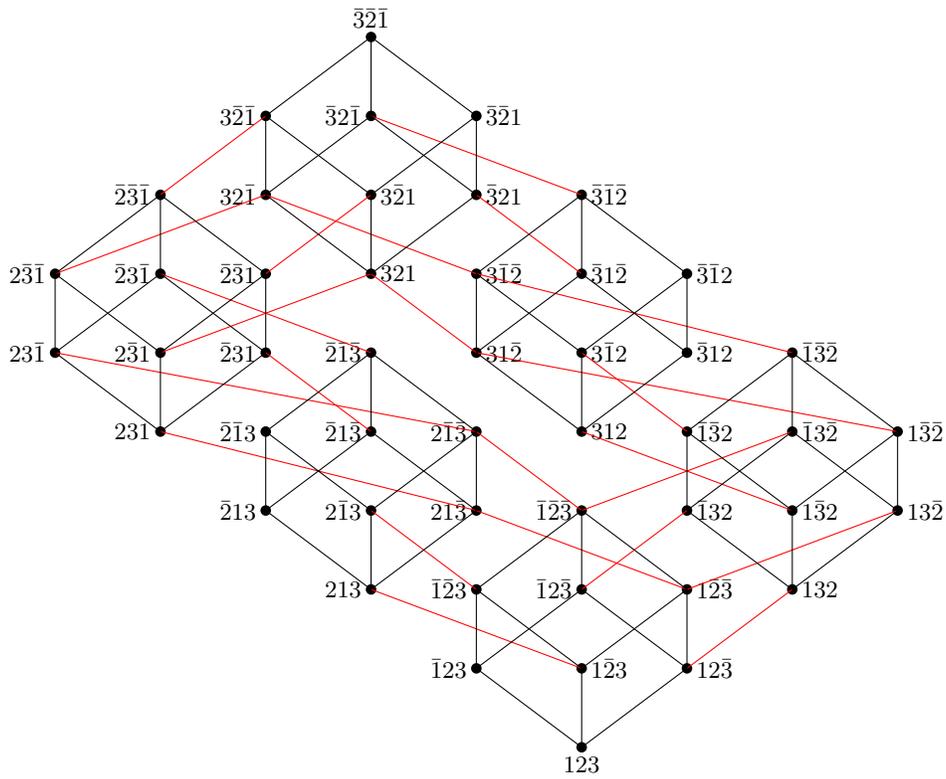





\begin{proposition}\label{t.properties}
The poset $(B_n,\preceq)$ is
\begin{itemize}
\item[(i)] ranked (by flag inversion number);

\item[(ii)] self-dual (by $\pi\mapsto \pi \mu_0$, where
$\mu_0:=[\bar n,\dots, \bar 1]$ is the unique maximal element in
this order);

\item[(iii)] rank-symmetric and unimodal. \end{itemize}
\end{proposition}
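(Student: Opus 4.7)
The plan is to handle the three parts in sequence, with most of the technical effort concentrated in part (i).

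For (i), the approach is a case analysis of how $\finv$ changes under right multiplication by each generator in $S_{2,n}$. A direct computation gives $\finv(\pi b_i) - \finv(\pi) = +1$ if $\pi(i)>0$ and $-1$ otherwise, while $\finv(\pi a_i) - \finv(\pi) \in \{\pm 1, \pm 3\}$, with the value $+3$ occurring exactly when $\pi(i+1) > 0$ and $|\pi(i)| < \pi(i+1)$. It then suffices to show that every elementary edge $\pi \lessdot \pi a_i$ of jump $3$ decomposes as three elementary jumps of size $1$. The key ingredient is the group identity $b_{i+1} a_i b_i = a_i$, which follows from $(B1)$ and $(B7)$; it supplies the factorization
\[
\pi \; \lessdot \; \pi b_{i+1} \; \lessdot \; \pi b_{i+1} a_i \; \lessdot \; \pi b_{i+1} a_i b_i = \pi a_i,
\]
and applying the case analysis at each intermediate step shows that each of the three smaller jumps satisfies $\Delta\finv = +1$. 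Consequently every cover in the Hasse diagram has $\finv$-difference exactly $1$, so $\finv$ is the rank function.

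For (ii), I first note that Proposition~\ref{finv-gf} produces a unique element of $B_n$ with $\finv = n^2$, namely $\mu_0 = [\bar n, \ldots, \bar 1]$; by (i) this is the unique maximum of $\preceq$. A direct computation gives $\mu_0^2 = e$ and
\[
\finv(\pi \mu_0) \;=\; 2\bigl(\tbinom{n}{2} - \inv(|\pi|)\bigr) + \bigl(n - \nega(\pi)\bigr) \;=\; n^2 - \finv(\pi),
\]
so $\Phi \colon \pi \mapsto \pi\mu_0$ is a rank-reversing involution. To see that $\Phi$ is an order anti-automorphism it suffices to show that it sends Hasse covers to Hasse covers. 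Using the conjugation identities $\mu_0 b_i \mu_0 = b_{n-i+1}$ and $\mu_0 a_i^{-1} \mu_0 = a_{n-i}$ (both verified directly from the defining formulas), a cover $\pi \lessdot \pi b_i$ (with $\pi(i) > 0$) transforms into $\pi b_i \mu_0 = \pi\mu_0 \cdot b_{n-i+1}$, and since $(\pi\mu_0)(n-i+1) = -\pi(i) < 0$ the move $b_{n-i+1}$ carries $\pi b_i\mu_0$ up to $\pi\mu_0$ with $\Delta\finv = +1$. Similarly, a cover $\pi \lessdot \pi a_i$ (with $\pi(i+1) < 0$ and $|\pi(i)| < |\pi(i+1)|$) transforms via $a_i\mu_0 a_{n-i} = \mu_0$ into $\pi\mu_0 = (\pi a_i \mu_0) \cdot a_{n-i}$, and checking the sign/magnitude conditions at positions $n-i, n-i+1$ yields $\Delta\finv = +1$ here as well.

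Part (iii) will follow from (i), (ii), and Proposition~\ref{finv-gf}: the rank-generating function of $(B_n,\preceq)$ is $\prod_{i=1}^n [2i]_q$, a product of palindromic unimodal polynomials with non-negative coefficients. Rank-symmetry is then immediate from the self-duality established in (ii), and the classical closure property of palindromic unimodal polynomials with non-negative coefficients under multiplication gives unimodality. The principal obstacle lies in (i), specifically in the case analysis for $a_i$-moves and the verification that every jump-of-$3$ elementary edge genuinely decomposes into three Hasse covers; parts (ii) and (iii) reduce essentially to bookkeeping once this is in place.
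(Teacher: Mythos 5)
Your proposal is correct and follows essentially the same route as the paper: part (i) via the same decomposition $\pi a_i=\pi b_{i+1}a_i b_i$ of the jump-$3$ edges into three unit $\finv$-steps, part (ii) via right multiplication by $\mu_0$ using the same complementation formula for $\finv(\pi\mu_0)$ and the conjugation identities $\mu_0^{-1}b_i^{-1}\mu_0=b_{n+1-i}$, $\mu_0^{-1}a_i^{-1}\mu_0=a_{n-i}$, and part (iii) from Proposition~\ref{finv-gf}. The only cosmetic difference is that you verify the anti-automorphism on Hasse covers while the paper verifies it directly on the elementary relations $\finv(\pi)<\finv(\pi s)$, which amounts to the same thing.
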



\begin{proof}\
(i) In order to show that all maximal chains between two elements
have the same length (the difference between their $\finv$ values),
it suffices to show that if $\sigma=\pi s$, with $s\in S_{2,n}$ and $\finv(\pi)<\finv(\sigma)$,
then there exists $\pi\prec w \preceq \sigma$ with $\finv(w)=\finv(\pi)+1$.
If $s\in\{b_1,\dots,b_n\}$ then
\[
\finv(\pi s)-\finv(\pi) =
2 \cdot (\inv(|\pi s|)-\inv(|\pi|)) + (\nega(\pi s)-\nega(\pi)) =
2 \cdot 0 \pm 1=1
\]
(positive by the assumption $\finv(\pi)<\finv(\pi s)$).
In this case we can take $w:=\pi s=\sigma$.
Otherwise, $s \in \{a_1,\dots,a_{n-1}\}$. Then
\[
\finv(\pi s)-\finv(\pi) =
2 \cdot (\inv(|\pi s|)-\inv(|\pi|)) + (\nega(\pi s)-\nega(\pi)) =
2 \cdot (\pm 1) \pm 1.
\]
Being positive by assumption, this number is either $1$ or
$3$. In the first case, which occurs if $s=a_i$ $(1 \le i \le n)$,
$|\pi(i)|<|\pi(i+1)|$ and $\pi(i+1)<0$, we can again take $w:=\pi
s$. In the second case, which occurs if $s=a_i$,
$|\pi(i)|<|\pi(i+1)|$ and $\pi(i+1)>0$, we have, by relations (B1)
and (B7) in Proposition~\ref{t.relation1}, $\sigma=\pi b_{i+1} a_i
b_i$, with $\finv(\pi)+3=\finv(\pi b_{i+1})+2=\finv(\pi
b_{i+1}a_i)+1= \finv(\sigma)$, and we can take $w:=\pi b_{i+1}$.

\smallskip

\noindent (ii) Let $\mu_0:=[\bar n,\dots, \bar 1]$. Then, for
every $\pi\in B_n$,

\begin{eqnarray*}
\finv(\pi \mu_0) & = & 2\cdot
\inv\left(\left|\pi \mu_0\right|\right) +\nega (\pi \mu_0)\\
& = & 2\left[{n\choose
2}-\inv\left(\left|\pi\right|\right)\right]+\left[n-\nega(\pi)\right]\\
& = & \finv(\mu_0)-\finv(\pi).
\end{eqnarray*}

If $\sigma=\pi s$ with $s\in S_{2,n}$ and
$\finv(\pi)<\finv(\sigma)$ then $$\finv(\pi \mu_0)-\finv(\sigma
\mu_0)=\finv(\sigma)-\finv(\pi)>0.$$


Also, $\pi \mu_0= \sigma \mu_0 \tilde s$, where
$$
\tilde s= \mu_0^{-1} s^{-1} \mu_0 =
\begin{cases}
   b_{n+1-i},   & \text{ if } s=b_i;  \\
   a_{n-i}, & \text{ if } s=a_i.
\end{cases}
$$
It follows, by Definition~\ref{weak-definition-B_n}, that
$ \pi \preceq \sigma \Longleftrightarrow \sigma \mu_0\preceq \pi \mu_0$,
and since right multiplication by $\mu_0$ is a bijection on $B_n$,
this proves self-duality.

\smallskip

\noindent (iii) Rank-symmetry follows from (ii) (and (i)).
Unimodality follows from (i) together with
Proposition~\ref{finv-gf}.

\end{proof}

The proof of Proposition~\ref{t.properties} implies the following statement.

\begin{corollary}\label{t.cover}
$\sigma$ covers $\pi$ in $(B_n,\preceq)$ if and only if either
\begin{itemize}
\item[(i)]
there exists $1\le i\le n$ such that
$$
i \not\in \Neg(\pi) \ \ \ \ \ \ {\rm{and}}\ \ \ \ \ \  \sigma=\pi b_i;
$$
or
\item[(ii)]
there exists $1 \le i \le n-1$ such that
$$
i+1\in \Neg(\pi), \ \ \ \ \ |\pi(i)|<|\pi(i+1)| \ \ \ \ \ \  {\rm{and}}\ \ \ \ \ \ \sigma=\pi a_i.
$$
\end{itemize}
\end{corollary}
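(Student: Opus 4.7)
The plan is to deduce the statement almost directly from Proposition~\ref{t.properties}(i) and the case analysis used in its proof. Since $(B_n,\preceq)$ is ranked by $\finv$, a cover $\pi\lessdot\sigma$ in $\preceq$ is characterised by $\pi\prec\sigma$ together with $\finv(\sigma)=\finv(\pi)+1$. Any relation $\pi\prec\sigma$ in $\preceq$ is realised by a chain of generating steps $\pi=\pi_0\lessdot\pi_1\lessdot\cdots\lessdot\pi_k=\sigma$, and the proof of Proposition~\ref{t.properties}(i) shows that every such step changes $\finv$ by either $+1$ or $+3$; hence $\finv(\sigma)-\finv(\pi)=1$ forces $k=1$ with the single step being a $+1$ step. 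Thus I only need to record which single applications of a generator $s\in S_{2,n}$ to $\pi$ increase $\finv$ by exactly $1$.

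First I would handle $s=b_i$: here $|\pi s|=|\pi|$, so $\finv(\pi s)-\finv(\pi)=\nega(\pi s)-\nega(\pi)=\pm 1$, with the $+1$ value occurring precisely when $b_i$ converts a positive entry into a negative one, i.e., when $i\notin\Neg(\pi)$; this is condition~(i). Next I would handle $s=a_i$: the proof of Proposition~\ref{t.properties}(i) already isolates the sub-cases of a positive jump. The jump equals $1$ exactly when $|\pi(i)|<|\pi(i+1)|$ and $\pi(i+1)<0$, i.e., $i+1\in\Neg(\pi)$, giving condition~(ii); the jump equals $3$ in the complementary sub-case $|\pi(i)|<|\pi(i+1)|$ with $\pi(i+1)>0$, and there the same proof exhibits the intermediate element $\pi b_{i+1}$ lying strictly between $\pi$ and $\pi a_i$ in $\preceq$, so $\pi a_i$ does not cover $\pi$.

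There is no real obstacle: the $\pm 1$ bookkeeping of $\nega$ under $b_i$ and of both $\nega$ and $\inv(|\cdot|)$ under $a_i$ has already been carried out inside the proof of Proposition~\ref{t.properties}(i). The only point worth stating explicitly is the observation that a cover in the transitive-closure poset $\preceq$ cannot skip a generating edge: this is forced by rankedness combined with the fact that every generating step increments $\finv$ by at least $1$, so a chain of length $\ge 2$ would violate $\finv(\sigma)-\finv(\pi)=1$.
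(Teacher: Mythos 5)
Your proposal is correct and follows essentially the same route as the paper, which gives no separate argument but simply notes that the corollary is read off from the case analysis in the proof of Proposition~\ref{t.properties}(i): rankedness by $\finv$ reduces covers to single generator steps with $\finv$-increment $1$, and the $\pm 1$ bookkeeping for $b_i$ and the $+1$ versus $+3$ dichotomy for $a_i$ (with the intermediate element $\pi b_{i+1}$ in the $+3$ case) yield exactly conditions (i) and (ii).
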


\medskip

\section{Properties of the Flag Weak Order}\label{s.weak}

\subsection{Lattice Structure}\ \\

For a set of pairs $A\subseteq \{(i,j):\ 1\le i< j\le n\}$ let $\M(A):= \{j:\ (i,j)\in A\}$.
For example, $\M(\{ (1,6), (1,4), (2,3), (4,6)\})= \{ 3,4,6\}$.

\begin{proposition}\label{t.criterion}
For every $\pi, \sigma\in B_n$,
\begin{eqnarray}\label{e.preceq}
\pi \preceq \sigma &\Longleftrightarrow&
\Inv(|\pi^{-1}|) \subseteq \Inv(|\sigma^{-1}|) {\text{\rm \ \ and \ }}\\
\nonumber & & \Neg(\pi^{-1})\setminus \Neg(\sigma^{-1})\subseteq
\M\left[\Inv(|\sigma^{-1}|) \setminus \Inv(|\pi^{-1}|)\right].
\end{eqnarray}
\end{proposition}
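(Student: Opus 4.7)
The plan is to prove both implications by induction on $\finv(\sigma) - \finv(\pi)$, in each case reducing everything to the analysis of a single cover as described by Corollary~\ref{t.cover}.

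For the forward implication ($\Rightarrow$), I first verify that the right-hand condition of (\ref{e.preceq}) is transitive: given it for $(\pi,\rho)$ and $(\rho,\sigma)$, transitivity of inclusion handles the $\Inv$-clause, and for the $\M$-clause I split $\Neg(\pi^{-1}) \setminus \Neg(\sigma^{-1})$ according to membership in $\Neg(\rho^{-1})$; each of the two pieces lies in $\M$ of a subset of $\Inv(|\sigma^{-1}|) \setminus \Inv(|\pi^{-1}|)$ by monotonicity of $\M$ under $A \subseteq B \subseteq C$. It then suffices to check the RHS on a single cover. For a type-(i) cover $\sigma = \pi b_i$ with $i \notin \Neg(\pi)$, one has $|\sigma| = |\pi|$ so $\Inv(|\pi^{-1}|) = \Inv(|\sigma^{-1}|)$, while $\Neg(\sigma^{-1}) = \Neg(\pi^{-1}) \cup \{|\pi|(i)\}$ strictly enlarges $\Neg(\pi^{-1})$; hence $\Neg(\pi^{-1}) \setminus \Neg(\sigma^{-1}) = \emptyset$ and the condition is vacuous. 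For a type-(ii) cover $\sigma = \pi a_i$ with $i+1 \in \Neg(\pi)$ and $|\pi(i)| < |\pi(i+1)|$, a direct computation with $\sigma^{-1} = a_i^{-1}\pi^{-1}$ shows $\Inv(|\sigma^{-1}|) \setminus \Inv(|\pi^{-1}|) = \{(|\pi|(i),|\pi|(i+1))\}$ and $\Neg(\pi^{-1}) \setminus \Neg(\sigma^{-1}) = \{|\pi|(i+1)\}$, so both sides of the $\M$-clause equal the singleton $\{|\pi|(i+1)\}$.

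For the reverse implication ($\Leftarrow$), I induct on $\finv(\sigma) - \finv(\pi)$. A short length count using $|\M(A)| \le |A|$ gives $\finv(\sigma) - \finv(\pi) \ge (\inv|\sigma| - \inv|\pi|) + |\Neg(\sigma^{-1}) \setminus \Neg(\pi^{-1})| \ge 0$, with equality only when $\pi = \sigma$; so for $\pi \ne \sigma$ a strict inequality holds and it suffices to exhibit a cover $\pi \lessdot \pi'$ with RHS preserved for $(\pi',\sigma)$. \emph{Case A:} if some $k \in \Neg(\sigma^{-1}) \setminus \Neg(\pi^{-1})$, set $i := |\pi|^{-1}(k)$; since $\pi(i) = k > 0$, $i \notin \Neg(\pi)$ and $\pi' := \pi b_i$ is a cover for which the $\Inv$-data is unchanged and $\Neg$-difference toward $\sigma$ is unchanged (as $k \in \Neg(\sigma^{-1})$ is absorbed). \emph{Case B:} otherwise $\Neg(\sigma^{-1}) \subseteq \Neg(\pi^{-1})$, forcing $\Inv(|\pi^{-1}|) \subsetneq \Inv(|\sigma^{-1}|)$; applying the classical cover-existence lemma for the weak order on $S_n$ (a consequence of (\ref{e.inv})) to $|\pi| < |\sigma|$ yields $i$ with $|\pi|(i) < |\pi|(i+1)$ and $(|\pi|(i),|\pi|(i+1)) \in \Inv(|\sigma^{-1}|) \setminus \Inv(|\pi^{-1}|)$. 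If $|\pi|(i+1) \in \Neg(\pi^{-1})$ (sub-case B1) take $\pi' := \pi a_i$, a type-(ii) cover; if $|\pi|(i+1) \notin \Neg(\pi^{-1})$ (sub-case B2) take the preparatory $\pi' := \pi b_{i+1}$, a type-(i) cover.

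RHS preservation in B1 is immediate: the removed element $|\pi|(i+1)$ exits the $\Neg$-difference exactly when its witness pair $(|\pi|(i),|\pi|(i+1))$ exits the $\Inv$-difference, and every other negation retains its original witness. In B2 the newly introduced element $|\pi|(i+1)$ in $\Neg((\pi')^{-1}) \setminus \Neg(\sigma^{-1})$ is precisely witnessed in $\M$ by the pair $(|\pi|(i),|\pi|(i+1))$ produced by the $S_n$ cover lemma. The induction hypothesis then yields $\pi' \preceq \sigma$, whence $\pi \preceq \sigma$. The principal technical obstacle is sub-case B2: the targeted $a_i$-step is not yet a legal cover from $\pi$ because $i+1 \notin \Neg(\pi)$, so a $b_{i+1}$-cover must be interposed, and the delicate point is that the $\M$-containment of the enlarged RHS absorbs exactly the negation this preparatory step introduces.
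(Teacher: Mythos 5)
Your proof is correct and follows essentially the same route as the paper: both directions reduce to the cover classification of Corollary~\ref{t.cover}, and the reverse implication uses the same device of interposing a $b_{i+1}$-cover before an $a_i$-step when $i+1\notin\Neg(\pi)$. The only differences are organizational --- you run a single-cover-at-a-time induction on $\finv(\sigma)-\finv(\pi)$ (grounded by your length inequality) and make the transitivity of the right-hand condition explicit, whereas the paper builds the whole lifted chain at once and computes $\Neg(\pi_k^{-1})$ in closed form --- but the combinatorial content is identical.
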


\begin{proof}\ \\
\noindent
$ \Longrightarrow$ :
It suffices to show that the RHS of (\ref{e.preceq}) holds
whenever $\sigma$ covers $\pi$ in $(B_n, \preceq)$.
By Corollary~\ref{t.cover}, there are two cases to check:
\begin{itemize}
\item[(i)]
There exists $1\le i\le n$ such that $i\not\in\Neg(\pi)$ and $\sigma=\pi b_i$.
Then clearly $\Inv(|\pi^{-1}|)=\Inv(|\sigma^{-1}|)$ and
$\Neg(\pi^{-1})\setminus\Neg(\sigma^{-1})=\emptyset$.
\item[(ii)]
There exists $1 \le i \le n-1$ such that
$i+1\in \Neg(\pi)$, $|\pi(i)|<|\pi(i+1)|$ and $\sigma=\pi a_i$.
Denoting $p:=|\pi(i)|$ and $q:=|\pi(i+1)|$ we have
\[
p<q,\qquad
\sigma(i+1) = \pi(i) = \pm p,\qquad
\sigma(i) = -\pi(i+1) = q.
\]
Thus $\Inv(|\sigma^{-1}|) = \Inv(|\pi^{-1}|)\cup \{(p,q)\}$ and
$\Neg(\pi^{-1}) \setminus \Neg(\sigma^{-1}) = \{q\}$.
\end{itemize}

\noindent
$ \Longleftarrow$ :
Assume that the RHS of (\ref{e.preceq}) holds. There are two cases:
\begin{itemize}
\item[(i)]
$\Inv(|\pi^{-1}|)= \Inv(|\sigma^{-1}|)$.
Then $|\pi^{-1}|= |\sigma^{-1}|$ and 
$\Neg(\pi^{-1})\setminus\Neg(\sigma^{-1})=\emptyset$, i.e.,
$\Neg(\pi^{-1})\subseteq \Neg(\sigma^{-1})$.
It is clear that one can get from $\pi$ to $\sigma$ by
a sequence of right multiplications by various $b_i$,
each step increasing $\finv(\cdot)$ by $1$.
Thus $\pi\preceq \sigma$.

\item[(ii)]
$\Inv(|\pi^{-1}|)$ is strictly contained in $\Inv(|\sigma^{-1}|)$. 
Thus $|\pi|$ is strictly smaller than $|\sigma|$ in the right weak order on $S_n$,
and one can get from $|\pi|$ to $|\sigma|$ by
a sequence of right multiplications by various Coxeter generators $s_i$ of $S_n$,
each step increasing the cardinality of the inversion set by $1$.
Let $s_{i_1}, \ldots, s_{i_k}$ be such a sequence,
so that $|\sigma|=|\pi| s_{i_1} \cdots  s_{i_k}$.
Let $a_{i_1}, \ldots, a_{i_k}$ be the corresponding sequence of generators of $B_n$.
Define $\pi_0 := \pi$ and, recursively,
\[
\pi_j := \pi_{j-1} \tilde{a}_{j} \qquad(1 \le j \le k),
\]
where
\[
\tilde{a}_{j} := \begin{cases}
a_{i_j}, &\mbox{if } i_j+1 \in \Neg(\pi_{j-1});\\
b_{i_j+1} a_{i_j}, & \mbox{otherwise}.
\end{cases}
\]
It is easy to see that
\[
\pi = \pi_0 \preceq \pi_1 \preceq \ldots \preceq \pi_k,
\]
with $\finv(\pi_j) - \finv(\pi_{j-1}) \in \{1, 2\}$ $(\forall j)$.
We shall show that $\pi_k \preceq \sigma$, implying $\pi \preceq \sigma$.

Indeed $|\pi_k| = |\sigma|$, and in particular $\Inv(|\pi_k^{-1}|) = \Inv(|\sigma^{-1}|)$.
Also, for each $1 \le j \le k$,
\[
\Neg(\pi_j^{-1}) = \begin{cases}
\Neg(\pi_{j-1}^{-1}) \setminus \{|\pi_{j-1}(i_j+1)|\}, &\mbox{if } i_j+1 \in \Neg(\pi_{j-1});\\
\Neg(\pi_{j-1}^{-1}), & \mbox{otherwise}.
\end{cases}
\]
Since $\M\left[\Inv(|\pi_j^{-1}|) \setminus \Inv(|\pi_{j-1}^{-1}|)\right] = \{|\pi_{j-1}(i_j+1)|\}$
we conclude that, in both cases,
\[
\Neg(\pi_{j}^{-1}) = \Neg(\pi_{j-1}^{-1}) \setminus \M\left[\Inv(|\pi_j^{-1}|) \setminus \Inv(|\pi_{j-1}^{-1}|)\right].
\]
Thus
\begin{eqnarray*}
\Neg(\pi_{k}^{-1})
&=& \Neg(\pi_{0}^{-1}) \setminus
\bigcup_{j=1}^{k} \M\left[\Inv(|\pi_j^{-1}|) \setminus \Inv(|\pi_{j-1}^{-1}|)\right]\\
&=& \Neg(\pi_0^{-1}) \setminus
\M\left[\Inv(|\pi_k^{-1}|) \setminus \Inv(|\pi_{0}^{-1}|)\right],
\end{eqnarray*}
where we have used the property $\bigcup_j M[A_j] = M[\bigcup_j A_j]$ and
the fact that $\Inv(|\pi_{j-1}^{-1}|) \subseteq \Inv(|\pi_{j}^{-1}|)$.
Since $\pi_0 = \pi$ and $\Inv(|\pi_k^{-1}|) = \Inv(|\sigma^{-1}|)$, we conclude that
\[
\Neg(\pi_k^{-1}) = \Neg(\pi^{-1}) \setminus M\left[\Inv(|\sigma^{-1}|) \setminus \Inv(|\pi^{-1}|)\right].
\]
Our assumption
\[
\Neg(\pi^{-1})\setminus \Neg(\sigma^{-1}) \subseteq
\M\left[\Inv(|\sigma^{-1}|) \setminus \Inv(|\pi^{-1}|)\right]
\]
is equivalent to
\[
\Neg(\sigma^{-1}) \supseteq \Neg(\pi^{-1}) \setminus
\M\left[\Inv(|\sigma^{-1}|) \setminus \Inv(|\pi^{-1}|)\right] ,
\]
namely to
\[
\Neg(\sigma^{-1}) \supseteq \Neg(\pi_k^{-1}).
\]
Together with  $\Inv(|\pi_k^{-1}|) = \Inv(|\sigma^{-1}|)$ this implies, by case (i) above,
that $\pi_k \preceq \sigma$.
\end{itemize}

\end{proof}

\begin{proposition}\label{t.lattice}
The poset $(B_n,\preceq)$ is a lattice.
\end{proposition}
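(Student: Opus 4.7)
The plan is to construct the join of $\pi, \sigma \in B_n$ explicitly, using the criterion in Proposition~\ref{t.criterion}. Write $I_\pi := \Inv(|\pi^{-1}|)$, $N_\pi := \Neg(\pi^{-1})$, $I_\sigma := \Inv(|\sigma^{-1}|)$, $N_\sigma := \Neg(\sigma^{-1})$. By the classical lattice structure of the weak order on $S_n$, applied via (\ref{e.inv}), there is a unique $u \in S_n$ whose inversion set $I := \Inv(u)$ is minimal (under containment) among inversion sets of elements of $S_n$ that contain $I_\pi \cup I_\sigma$. Define
\[
N := \bigl(N_\pi \setminus \M[I \setminus I_\pi]\bigr) \cup \bigl(N_\sigma \setminus \M[I \setminus I_\sigma]\bigr),
\]
and let $\tau \in B_n$ be the unique element with $|\tau^{-1}| = u$ and $\Neg(\tau^{-1}) = N$. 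I claim $\tau = \pi \vee \sigma$.

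First, I would verify that $\tau$ is an upper bound. By Proposition~\ref{t.criterion}, for $\pi \preceq \tau$ the inclusion $I_\pi \subseteq I$ is immediate by construction, and any $x \in N_\pi \setminus N$ must lie in $\M[I \setminus I_\pi]$ (otherwise $x$ would belong to $N$ by definition). The case $\sigma \preceq \tau$ is symmetric.

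Next, to show minimality, suppose $\rho$ is any upper bound for $\pi$ and $\sigma$. Then $\Inv(|\rho^{-1}|)$ contains both $I_\pi$ and $I_\sigma$, so by the defining property of $u$ in $S_n$ we have $I \subseteq \Inv(|\rho^{-1}|)$. For the negative-set condition of $\tau \preceq \rho$, pick $x \in N \setminus \Neg(\rho^{-1})$; by symmetry assume $x \in N_\pi \setminus \M[I \setminus I_\pi]$. The assumption $\pi \preceq \rho$ yields $x \in \M[\Inv(|\rho^{-1}|) \setminus I_\pi]$. The key step is the disjoint decomposition
\[
\Inv(|\rho^{-1}|) \setminus I_\pi = \bigl(\Inv(|\rho^{-1}|) \setminus I\bigr) \sqcup \bigl(I \setminus I_\pi\bigr),
\]
valid because $I_\pi \subseteq I \subseteq \Inv(|\rho^{-1}|)$. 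Since $\M$ distributes over unions, $x$ lies in $\M[\Inv(|\rho^{-1}|) \setminus I] \cup \M[I \setminus I_\pi]$; the choice of $x$ excludes the second term, leaving $x \in \M[\Inv(|\rho^{-1}|) \setminus I]$, as required.

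Hence every pair in $B_n$ has a join; together with the global maximum $\mu_0$ from Proposition~\ref{t.properties}(ii) and the finiteness of $B_n$, this shows $(B_n,\preceq)$ is a lattice. The main obstacle is engineering the set $N$ so that both the upper-bound property and the minimality property emerge cleanly from Proposition~\ref{t.criterion}; the technical heart is the disjoint splitting of $\Inv(|\rho^{-1}|) \setminus I_\pi$ into the part inside $I$ and the part outside $I$, which is what forces $x$ into $\M[\Inv(|\rho^{-1}|) \setminus I]$ once the inner part has been excluded.
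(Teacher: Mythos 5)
Your construction of the join is correct and is essentially the paper's argument run in the dual direction. The paper builds the \emph{meet} $\sigma_\wedge$, setting $|\sigma_\wedge|:=|\sigma_1|\wedge_{S_n}|\sigma_2|$ and $\Neg(\sigma_\wedge^{-1})$ equal to an intersection of sets of the form $\Neg(\sigma_i^{-1})\cup\M[\cdots]$, and then obtains joins from the self-duality $\pi\mapsto\pi\mu_0$. You build the \emph{join} directly; your set $N$ is precisely the complement of the paper's formula for $\Pos(A_\vee^{-1})$ in Lemma~\ref{t.meet_join}(ii), and your key step (splitting $\Inv(|\rho^{-1}|)\setminus I_\pi$ along the intermediate inversion set $I$ and using that $\M$ distributes over unions) is the mirror image of the paper's identity $\M(|\pi|,|\sigma_1|)=\M(|\pi|,|\sigma_\wedge|)\cup\M(|\sigma_\wedge|,|\sigma_1|)$. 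Both your upper-bound check and your minimality check go through as written.

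The one genuine flaw is the closing sentence. A finite poset with all binary joins and a global \emph{maximum} need not be a lattice: in the three-element poset $\{a,b,\hat{1}\}$ with $a,b$ incomparable and both below $\hat{1}$, all joins exist but $a\wedge b$ does not. What your argument needs is a global \emph{minimum}, so that the meet of $x$ and $y$ can be obtained as the join of the nonempty finite set of their common lower bounds. Fortunately $(B_n,\preceq)$ has one: the identity is the unique element with $\finv=0$, and since $\Inv(|e^{-1}|)=\Neg(e^{-1})=\emptyset$, Proposition~\ref{t.criterion} gives $e\preceq\pi$ for all $\pi$. So the fix is one line; alternatively, transport your joins to meets via the self-duality of Proposition~\ref{t.properties}(ii), which is exactly how the paper passes in the opposite direction from its meets to joins.
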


\begin{proof}
For simplicity of notation, let
\begin{equation}\label{M-notation}
\M(|\pi|,|\sigma|) := \M\left[\Inv(|\sigma^{-1}|) \setminus \Inv(|\pi^{-1}|)\right] \qquad(\pi,\sigma \in B_n).
\end{equation}
Proposition~\ref{t.criterion} can be stated as:
\[
\pi \preceq \sigma \ \ \Longleftrightarrow \ \
\Inv(|\pi^{-1}|) \subseteq \Inv(|\sigma^{-1}|) {\text{\rm \ \ and \ }}
\Neg(\pi^{-1})\subseteq \Neg(\sigma^{-1}) \cup \M(|\pi|, |\sigma|).
\]
Let  $\sigma_1, \sigma_2$ be two elements of $B_n$. It follows that, for any $\pi \in B_n$,
\begin{eqnarray}\label{e.preceq2}
& & \pi \preceq \sigma_1 \text{\rm \ \ and \ } \pi \preceq \sigma_2 \Longleftrightarrow\\
\nonumber & & \qquad\qquad \Inv(|\pi^{-1}|)\subseteq \Inv(|\sigma_1^{-1}|) \cap \Inv(|\sigma_2^{-1}|) {\text{\rm \ \ and \ }}\\
\nonumber & & \qquad\qquad \Neg(\pi^{-1}) \subseteq
\left(\Neg(\sigma_1^{-1}) \cup \M(|\pi|,|\sigma_1|)\right) \cap
\left(\Neg(\sigma_2^{-1}) \cup \M(|\pi|,|\sigma_2|)\right).
\end{eqnarray}

We shall now define a candidate for the meet (in $B_n$) of $\sigma_1$ and $\sigma_2$,
and prove that it has the required properties.
First note that the intersection of inversion sets (of permutations in $S_n$)
is not necessarily an inversion set. Nevertheless, since $S_n$ under right weak order is
a lattice, there exists a meet
\[
\tau = |\sigma_1| \wedge_{S_n} |\sigma_2| \in S_n
\]
which satisfies, by~(\ref{e.inv}),
\[
\Inv(\tau^{-1})\subseteq \Inv(|\sigma_1^{-1}|) \cap
\Inv(|\sigma_2^{-1}|)
\]
and
\begin{equation}\label{e.meet_inv}
\Inv(\gamma^{-1})\subseteq \Inv(|\sigma_1^{-1}|) \cap
\Inv(|\sigma_2^{-1}|) \ \ \Longrightarrow \ \
\Inv(\gamma^{-1})\subseteq \Inv(\tau^{-1}) \qquad(\forall \gamma
\in S_n).
\end{equation}
Define $\sigma_\wedge \in B_n$ by
\begin{equation}\label{e.meet_absolute}
|\sigma_\wedge| := |\sigma_1| \wedge_{S_n} |\sigma_2| \quad (=\tau)
\end{equation}
and
\begin{equation}\label{e.meet_Neg}
\Neg(\sigma_\wedge^{-1}) := \left(\Neg(\sigma_1^{-1}) \cup
\M(\tau,|\sigma_1|)\right) \cap \left(\Neg(\sigma_2^{-1}) \cup
\M(\tau,|\sigma_2|)\right).
\end{equation}
Then clearly $\sigma_\wedge \preceq \sigma_1$ and $\sigma_\wedge \preceq \sigma_2$.
It remains to show that $\pi \preceq \sigma_1$ and $\pi \preceq \sigma_2$ implies $\pi \preceq \sigma_\wedge$.
This is straightforward if $\Inv(|\pi^{-1}|) = \Inv(|\sigma_\wedge^{-1}|)$, but more intricate otherwise.

Assume that $\pi \preceq \sigma_1$ and $\pi \preceq \sigma_2$. Then
\[
\Inv(|\pi^{-1}|)\subseteq \Inv(|\sigma_1^{-1}|) \cap \Inv(|\sigma_2^{-1}|)
\]
so that, by~(\ref{e.meet_inv}),
\[
\Inv(|\pi^{-1}|)\subseteq \Inv(\tau^{-1}) =
\Inv(|\sigma_\wedge^{-1}|).
\]
From
\[
\Inv(|\pi^{-1}|)\subseteq \Inv(|\sigma_\wedge^{-1}|) \subseteq \Inv(|\sigma_1^{-1}|)
\]
it now follows that
\[
\Inv(|\sigma_1^{-1}|) \setminus \Inv(|\pi^{-1}|) =
\left(\Inv(|\sigma_1^{-1}|) \setminus \Inv(|\sigma_\wedge^{-1}|)\right) \cup
\left(\Inv(|\sigma_\wedge^{-1}|) \setminus \Inv(|\pi^{-1}|)\right)
\]
and therefore
\[
\M(|\pi|,|\sigma_1|) = \M(|\pi|,|\sigma_\wedge|) \cup \M(|\sigma_\wedge|,|\sigma_1|);
\]
similarly for $\sigma_2$. From~(\ref{e.preceq2}) it thus follows that
\begin{eqnarray*}
\Neg(\pi^{-1}) &\subseteq&
\left(\Neg(\sigma_1^{-1}) \cup \M(|\pi|,|\sigma_1|)\right) \cap
\left(\Neg(\sigma_2^{-1}) \cup \M(|\pi|,|\sigma_2|)\right) \\
&=&
\left(\Neg(\sigma_1^{-1}) \cup \M(|\pi|,|\sigma_\wedge|) \cup \M(|\sigma_\wedge|,|\sigma_1|)\right) \cap \\
& &
\left(\Neg(\sigma_2^{-1}) \cup \M(|\pi|,|\sigma_\wedge|) \cup \M(|\sigma_\wedge|,|\sigma_2|)\right) \\
&=&
\M(|\pi|,|\sigma_\wedge|) \cup \\
& & \left[\left(\Neg(\sigma_1^{-1}) \cup \M(|\sigma_\wedge|,|\sigma_1|)\right) \cap
\left(\Neg(\sigma_2^{-1}) \cup \M(|\sigma_\wedge|,|\sigma_2|)\right)\right] \\
&=&
\M(|\pi|,|\sigma_\wedge|) \cup \Neg(\sigma_\wedge^{-1}),
\end{eqnarray*}
using definition~(\ref{e.meet_Neg}) of $\Neg(\sigma_\wedge^{-1})$.
In other words, $\pi \preceq \sigma_\wedge$ as required.

We have shown the existence of meets in $(B_n,\preceq)$. The existence of joins follows by self-duality
(Proposition~\ref{t.properties}(ii)):
\[
\sigma_1 \vee \sigma_2 = (\sigma_1 \mu_0 \wedge \sigma_2 \mu_0) \mu_0.
\]
\end{proof}

Note that (\ref{e.meet_absolute})-(\ref{e.meet_Neg}) in the proof of
Proposition~\ref{t.lattice} provide an explicit description of the meet of two elements.
One can generalize this description to any number of elements, using the notation
$\M(|\pi|,|\sigma|)$ from~(\ref{M-notation}). For the corresponding description of the join
it is convenient to use also the notation $\Pos(\sigma) := [n] \setminus \Neg(\sigma)$ for $\sigma \in B_n$.

\begin{lemma}\label{t.meet_join}
Let $A$ be an arbitrary subset of $B_n$.
\begin{itemize}
\item[(i)]
The meet $A_\wedge$ of $A$ in $(B_n,\preceq)$ is determined by
\[
|A_\wedge| := \bigwedge_{\sigma\in A} |\sigma|,
\]
where the meet is taken with respect to the (right) weak order on $S_n$, and by
\[
\Neg(A_\wedge^{-1}) := \bigcap_{\sigma\in A}
\left( \Neg(\sigma^{-1}) \cup \M(|A_\wedge|, |\sigma|) \right).
\]

\item[(ii)]
The join $A_\vee$ of $A$ in $(B_n,\preceq)$ is determined by
\[
|A_\vee| := \bigvee_{\sigma \in A} |\sigma|
\]
and by
\[
\Pos(A_\vee^{-1}) := \bigcap_{\sigma \in A} 
\left( \Pos(\sigma^{-1}) \cup \M(|\sigma|, |A_\vee|) \right).
\]

\end{itemize}

\end{lemma}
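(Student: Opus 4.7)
The plan is to induct on $|A|$, with Proposition~\ref{t.lattice} serving as both the base case ($|A|=2$) and the engine driving the inductive step via iterated pairwise meets and joins.

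For part (i), I would write $A = A' \cup \{\sigma_k\}$ with $|A'| = |A|-1$, let $A'_\wedge$ be the meet of $A'$ supplied by the inductive hypothesis, and set $A_\wedge := A'_\wedge \wedge \sigma_k$ using Proposition~\ref{t.lattice}. Associativity of meet in the right weak order on $S_n$, together with the inductive formula for $|A'_\wedge|$, yields
\[
|A_\wedge| \;=\; |A'_\wedge| \wedge_{S_n} |\sigma_k| \;=\; \bigwedge_{\sigma \in A} |\sigma|,
\]
settling the absolute part. For the Neg part, the key combinatorial ingredient is the decomposition identity: whenever $|A_\wedge| \le |A'_\wedge| \le |\sigma|$ in the right weak order on $S_n$ (which holds for every $\sigma \in A'$ by the meet property), the disjoint decomposition of $\Inv(|\sigma^{-1}|) \setminus \Inv(|A_\wedge^{-1}|)$ into its parts below and above $|A'_\wedge|$ gives
\[
\M(|A_\wedge|,|\sigma|) \;=\; \M(|A_\wedge|,|A'_\wedge|) \,\cup\, \M(|A'_\wedge|,|\sigma|).
\]
Substituting this into the two-element Neg-formula from Proposition~\ref{t.lattice} and distributing the union $\M(|A_\wedge|,|A'_\wedge|) \cup (\cdot)$ through the intersection defining $\Neg((A'_\wedge)^{-1})$---exactly as in the final computation of the proof of Proposition~\ref{t.lattice}---collapses the nested expression into the claimed closed form for $\Neg(A_\wedge^{-1})$.

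For part (ii), I would appeal to self-duality (Proposition~\ref{t.properties}(ii)): the map $\sigma \mapsto \sigma \mu_0$ is an order-reversing involution of $(B_n,\preceq)$ and therefore interchanges joins and meets, giving the identity $A_\vee = (A\mu_0)_\wedge \cdot \mu_0$ with $A\mu_0 := \{\sigma \mu_0 : \sigma \in A\}$. Applying part (i) to $A\mu_0$ and translating back relies on the direct identities $|\sigma\mu_0| = |\sigma|\, w_0$ (where $w_0$ is the longest element of $S_n$; note that right multiplication by $w_0$ reverses the weak order on $S_n$ and thus converts joins to meets) and $\Neg((\sigma\mu_0)^{-1}) = \Pos(\sigma^{-1})$ (both immediate from $\mu_0(i) = -(n+1-i)$). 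A routine verification that the $\M$-sets transform compatibly under $\sigma \mapsto \sigma\mu_0$ then delivers the stated $\Pos$-formula.

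The main obstacle will be the Neg-inductive step in (i): although the $\M$-decomposition identity is elementary, verifying that iterating the pairwise Proposition~\ref{t.lattice} formula collapses into the clean intersection indexed by all of $A$ requires careful bookkeeping of set-theoretic distributions. By contrast, the translation argument for (ii) is largely mechanical once the transformation formulas for $\Inv$, $\Neg$, $\Pos$, and $\M$ under $\sigma \mapsto \sigma\mu_0$ are in hand.
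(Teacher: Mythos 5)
Your proposal is correct and follows essentially the route the paper intends: the paper states the lemma without proof as the generalization of the two-element formulas (\ref{e.meet_absolute})--(\ref{e.meet_Neg}), and your induction via iterated pairwise meets, using the decomposition $\M(|A_\wedge|,|\sigma|)=\M(|A_\wedge|,|A'_\wedge|)\cup\M(|A'_\wedge|,|\sigma|)$ and distributing the union over the intersection, is exactly the "careful bookkeeping" that generalization requires. The self-duality argument for the join likewise mirrors how the proof of Proposition~\ref{t.lattice} obtains joins from meets via $\mu_0$.
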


\medskip

\begin{rem}
For $n\ge 3$ $(B_n,\preceq)$ is not semi-modular. To verify that
notice that $\pi=2 \bar 1 \bar 3$ and $\sigma=\bar 1 3 \bar 2$
cover their meet $\pi\wedge \sigma=\bar 1 \bar 2 \bar 3$ but are
not covered by their join $\pi\vee \sigma=32 \bar 1$.
Also, for $n\ge 2$ $(B_n,\preceq)$ is not complemented,
since $1\bar 2$ has no complement in $(B_2,\preceq)$.
\end{rem}

\smallskip

\subsection{Homotopy Type and M\"obius Function}\ \\

The following results generalize well-known properties of the
classical weak order on a Coxeter group.
Recall that an {\em atom} in an interval $[\pi, \sigma]$ is 
an element $\tau \in [\pi, \sigma]$ covering $\pi$.
Recall also the notation $A_\vee$ from Lemma~\ref{t.meet_join}(ii).

\begin{lemma}\label{t.injective}
Suppose that $\pi\prec \sigma$ in $B_n$. 
Then, for any two sets $A$ and $B$ of atoms in the interval $[\pi, \sigma]$,  
\[
A \ne B \,\Longrightarrow\, A_\vee \ne B_\vee.
\]
\end{lemma}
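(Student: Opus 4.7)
The strategy is to show that the set $A$ can be recovered from the pair $(\pi, A_\vee)$, which gives injectivity immediately. By Corollary~\ref{t.cover} every atom of $[\pi,\sigma]$ is either of type (i), $\pi b_k$ with $k \in \Pos(\pi)$, or of type (ii), $\pi a_k$ with $k+1 \in \Neg(\pi)$ and $|\pi(k)|<|\pi(k+1)|$. Write $A = A_1 \sqcup A_2$ accordingly and set $I_1 := \{k : \pi b_k \in A_1\}$ and $I_2 := \{k : \pi a_k \in A_2\}$. The plan is to reconstruct $I_2$ from $|A_\vee|$ and then $I_1$ from $\Pos(A_\vee^{-1})$ via the explicit formulas of Lemma~\ref{t.meet_join}(ii).

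For $I_2$: since $|\pi b_k|=|\pi|$ and (by the type-(ii) hypothesis) $|\pi|<|\pi|s_k$ in the right weak order on $S_n$, Lemma~\ref{t.meet_join}(ii) yields $|A_\vee| = \bigvee_{k \in I_2} |\pi| s_k$. Standard properties of parabolic subgroups in Coxeter groups show that this join equals $|\pi| \cdot w_{I_2}$, where $w_{I_2}:=\bigvee_{k \in I_2} s_k$ is the longest element of $\langle s_k : k \in I_2 \rangle$, and that the right descent set of $w_{I_2}$ equals $I_2$ itself. Hence $I_2$ is recovered as the right descent set of $|\pi|^{-1} |A_\vee|$.

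For $I_1$: substituting into Lemma~\ref{t.meet_join}(ii), the contribution of a type-(i) atom $\pi b_k$ is $(\Pos(\pi^{-1}) \setminus \{\pi(k)\}) \cup \M_\vee$, where $\M_\vee := \M(|\pi|,|A_\vee|)$. Using $\Pos((\pi a_k)^{-1}) = \Pos(\pi^{-1}) \cup \{|\pi(k+1)|\}$ and $|\pi(k+1)| \in \M_\vee$ (which holds because $(|\pi(k)|,|\pi(k+1)|)$ is a new inversion in $\Inv(|A_\vee|^{-1})$), a short calculation shows that every type-(ii) contribution simplifies to $\Pos(\pi^{-1}) \cup \M_\vee$, which is a superset of every type-(i) contribution and is therefore absorbed. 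One thus obtains
\[
\Pos(A_\vee^{-1}) = \bigcap_{k \in I_1}\bigl[(\Pos(\pi^{-1}) \setminus \{\pi(k)\}) \cup \M_\vee\bigr].
\]

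The crux is then the technical claim that $\pi(k) \notin \M_\vee$ for every $k \in \Pos(\pi)$; granted this, the displayed intersection collapses to $(\Pos(\pi^{-1}) \cup \M_\vee) \setminus \{\pi(k) : k \in I_1\}$, so $I_1$ is uniquely determined from $A_\vee$ since $\pi$ is a bijection. To prove the claim, I would use the explicit description of $w_{I_2}$ as a product of reversals of the ``extended blocks'' $[c, d+1]$ corresponding to the connected components $[c, d]$ of $I_2$: since each $e \in I_2$ forces $e+1 \in \Neg(\pi)$, positions $c+1,\ldots,d+1$ in every such block lie in $\Neg(\pi)$, so the only position in any block that can lie in $\Pos(\pi)$ is the left endpoint $c$; and for $k=c$, any $p<\pi(c)$ witnessing $\pi(c) \in \M_\vee$ would require $|\pi|^{-1}(p)<c$ together with $w_{I_2}^{-1}(|\pi|^{-1}(p)) > w_{I_2}^{-1}(c)=d+1$, which is impossible because $w_{I_2}^{-1}$ stabilizes each block setwise. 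This block-structure verification is the main technical obstacle.
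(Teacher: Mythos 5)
Your proof is correct and follows essentially the same route as the paper's: split the atoms into $a$-type and $b$-type, recover the $a$-indices from $|A_\vee|$ (the paper cites the corresponding uniqueness statement for the weak order on $S_n$, \cite[Lemma 3.2.4(i)]{BB}, where you invoke the longest element of the parabolic subgroup and its descent set), and recover the $b$-indices from $\Pos(A_\vee^{-1})$ via the same computation, with the key point in both arguments being that $\M(|\pi|,|A_\vee|)$ avoids the values $\pi(k)$ with $\pi b_k \in A$. Your block-reversal verification of that last claim just makes explicit what the paper dispatches with ``an examination of $|A_\vee|$ as a join of atoms.''
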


\begin{proof}
For a set $A$ of atoms  in the interval $[\pi, \sigma]$ denote
\[
A_1 := A \cap \{\pi a_i\,:\,1 \le i \le n-1\}
\]
and
\[
A_2 := A \cap \{\pi b_i\,:\,1 \le i \le n\}.
\]
Assume now that $A$ and $B$ are sets of atoms in $[\pi, \sigma]$ such that $A_\vee = B_\vee$.
We shall prove that $A = B$.

Since $|\pi b_i| = |\pi|$ for all $i$, it follows from Lemma~\ref{t.meet_join}(ii) that
\[
|A_\vee| = \bigvee_{\tau \in A} |\tau| = \bigvee_{\tau \in A_1} |\tau|,
\]
where joins are taken in $S_n$; and therefore
\[
A_\vee = B_\vee  \,\Longrightarrow\, |A_\vee| = |B_\vee| \,\Longrightarrow\, A_1 = B_1. 
\]
The latter implication holds since joins of sets of atoms uniquely determine the sets
in any interval in the usual weak order on $S_n$; see, e.g., \cite[Lemma 3.2.4(i)]{BB}.

We still need to show that $A_2 = B_2$. 
If $\sigma = \pi a_i$ covers $\pi$ then, by definition,
$\Pos(\sigma^{-1})$ is the (disjoint) union of $\Pos(\pi^{-1})$ and $\{|\pi(i+1)|\}$, 
while $\M(|\pi|, |{A}_\vee|)$ is the (not necessarily disjoint) union 
of $\M(|\sigma|, |{A}_\vee|)$ and $\{|\pi(i+1)|\}$. 
Hence, for every $\sigma \in A_1$,
\[
\Pos(\sigma^{-1}) \cup \M(|\sigma|, |{A}_\vee|) = 
\Pos(\pi^{-1}) \cup \M(|\pi|, |{A}_\vee|).
\]
On the other hand, if $\sigma = \pi b_i \in A_2$ then
\[
\Pos(\sigma^{-1}) \cup \M(|\sigma|, |{A}_\vee|) = 
(\Pos(\pi^{-1}) \setminus \{\pi(i)\}) \cup \M(|\pi|, |{A}_\vee|).
\]
Thus, by Lemma~\ref{t.meet_join}(ii),
\[
\Pos(A_\vee^{-1}) = 
\bigcap_{\sigma \in A_1 \cup A_2} \left(
\Pos(\sigma^{-1}) \cup \M(|\sigma|, |A_\vee|) \right)
\]
\[
= \left( \Pos(\pi^{-1}) \cup \M(|\pi|, |{A}_\vee|) \right) \cap
\bigcap_{\sigma \in A_2} \left( \Pos(\sigma^{-1}) \cup
\M(|\sigma|, |A_\vee|) \right)
\]
\[
= \left( \Pos(\pi^{-1})\cup \M(|\pi|, |{A}_\vee|) \right) \cap
\bigcap_{\pi b_i \in A_2} \left( (\Pos(\pi^{-1})\setminus \{\pi(i)\})
\cup \M(|\pi|, |A_\vee|) \right)
\]
\[
= \left( \Pos(\pi^{-1})\setminus \{\pi(i) \,:\, \pi b_i \in A_2\} \right) 
\cup \M(|\pi|, |{A}_\vee|).
\]
(The intermediate steps, though not the end result, should be slightly rewritten if $A_1 = \emptyset$.)
If we show that 
\begin{equation}\label{e.pi_bi}
\pi b_i \in A_2 \,\then\, \pi(i) \not\in \M(|\pi|, |{A}_\vee|)
\end{equation}
it will then follow that, assuming $|A_\vee| = |B_\vee|$,
\begin{equation}\label{e.Pos_eq}
\Pos(A_\vee^{-1}) = \Pos(B_\vee^{-1}) \,\Longleftrightarrow\, A_2 = B_2.
\end{equation}
Indeed, 
\[
\pi b_i \in A_2 \,\then\, \pi(i) > 0 \,\then\, \pi a_{i-1} \not\in A_1.
\]
An examination of $|A_\vee|$ as a join of atoms in an interval of $S_n$ shows that
\[
\M(|\pi|, |{A}_\vee|) \subseteq \{|\pi(i)| \,:\, \pi a_{i-1} \in A_1\},
\]
which implies (\ref{e.pi_bi}) and (\ref{e.Pos_eq}) and completes the proof. 

\end{proof}

Lemma~\ref{t.injective} leads to an easy way to determine 
the homotopy type and M\"obius function for open intervals in $(B_n, \preceq)$,
generalizing~\cite[Theorem 3.2.7 and Corollary 3.2.8]{BB}.

\begin{proposition}\label{t.homotopy-type}
Suppose that $\pi\prec \sigma$ in $B_n$ and $\finv(\sigma)-\finv(\pi)\ge 2$.
Then the order complex of the open interval $(\pi,\sigma)$ is
homotopy equivalent to the sphere ${\bf S}^{k-2}$
if $\sigma$ is the join of $k$ atoms in the interval $[\pi, \mu_0]$,
and is contractible otherwise.
\end{proposition}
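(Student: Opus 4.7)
The plan is to mimic the classical proof for open intervals in the Coxeter weak order (cf.\ \cite[Thm.~3.2.7 and Cor.~3.2.8]{BB}), replacing the Coxeter-theoretic ingredients by results already established here: the lattice structure of $(B_n,\preceq)$ (Proposition~\ref{t.lattice}), the explicit formulas for joins (Lemma~\ref{t.meet_join}(ii)), and, crucially, the injectivity of the atom-join map (Lemma~\ref{t.injective}). The main tool is Bj\"orner's crosscut theorem, which I would apply to the finite lattice $[\pi,\sigma]$; the desired dichotomy will fall out from a straightforward analysis of the resulting crosscut complex.

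The first step is to let $A$ denote the set of atoms of $[\pi,\sigma]$---the covers of $\pi$ in $B_n$ (as described by Corollary~\ref{t.cover}) that lie weakly below $\sigma$. Since every element of $(\pi,\sigma)$ dominates some member of $A$, the set $A$ is a crosscut, and Bj\"orner's theorem identifies $\Delta(\pi,\sigma)$ up to homotopy with the crosscut complex $\Gamma$ whose faces are exactly the subsets $B\subseteq A$ satisfying $B_\vee\prec\sigma$; here joins taken inside $[\pi,\sigma]$ coincide with joins in $(B_n,\preceq)$ because the join of elements $\preceq\sigma$ is again $\preceq\sigma$, so Lemma~\ref{t.injective} applies to these subsets in its stated form.

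The second and final step splits into two cases via Lemma~\ref{t.injective}, which asserts that $B\mapsto B_\vee$ is injective on subsets of $A$. If $A_\vee=\sigma$, then injectivity forces $B_\vee\prec\sigma$ for every proper $B\subsetneq A$ while $A$ itself is excluded from $\Gamma$; thus $\Gamma=\partial\Delta^{|A|-1}$, homotopy equivalent to ${\bf S}^{|A|-2}$. If instead $A_\vee\prec\sigma$, then $B_\vee\preceq A_\vee\prec\sigma$ for every $B\subseteq A$, so $\Gamma$ is a full simplex and hence contractible. A brief bookkeeping step identifies $k$ with $|A|$: if $\sigma$ equals the join of $k$ atoms from $[\pi,\mu_0]$, each of those atoms lies $\preceq\sigma$ and hence belongs to $A$, and by injectivity the chosen set of atoms must coincide with all of $A$, giving $k=|A|$ and the promised ${\bf S}^{k-2}$. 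I foresee no serious obstacle beyond invoking the crosscut theorem correctly in the form most convenient for a lattice whose atoms need not be modular; the rest is a direct translation of the classical argument.
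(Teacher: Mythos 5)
Your argument is correct and is precisely the one the paper intends: the paper leaves the proof to the reader, citing the analogous crosscut-theorem argument for the symmetric group in \cite[Theorem 3.2.7]{BB}, with Lemma~\ref{t.injective} supplying the key injectivity of the atom-join map. Your two-case analysis of the crosscut complex (boundary of a simplex versus full simplex) and the identification of $k$ with the number of atoms of $[\pi,\sigma]$ follow that template exactly.
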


\begin{corollary}\label{t.mobius}
For every $\pi, \sigma\in B_n$,
\[
\mu(\pi,\sigma)=\begin{cases}
   (-1)^k,   & \text{ if } \sigma \text{ is the join of } k \text{ atoms in } [\pi, \mu_0];  \\
   0, & \text{ otherwise. }
\end{cases}
\]
\end{corollary}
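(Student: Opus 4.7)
The plan is to deduce Corollary~\ref{t.mobius} directly from Proposition~\ref{t.homotopy-type} via Philip Hall's classical theorem, which identifies the Möbius function of an interval with the reduced Euler characteristic of its order complex. Specifically, for any $\pi \prec \sigma$ in a finite poset one has
\[
\mu(\pi,\sigma) = \tilde{\chi}\bigl( \Delta((\pi,\sigma)) \bigr),
\]
where $\Delta((\pi,\sigma))$ is the order complex of the open interval.

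First I would dispose of the small cases not covered by Proposition~\ref{t.homotopy-type}. If $\pi = \sigma$, then $\sigma$ is the join of the empty set of atoms in $[\pi, \mu_0]$, so $k = 0$ and indeed $\mu(\pi,\sigma) = 1 = (-1)^0$. If $\sigma$ covers $\pi$, then $\finv(\sigma) - \finv(\pi) = 1$, and $\sigma$ is itself the unique atom of $[\pi, \sigma]$; by Lemma~\ref{t.injective} (applied inside $[\pi, \mu_0]$) this atom is also the join of one atom, so $k = 1$, and $\mu(\pi,\sigma) = -1 = (-1)^1$, as required.

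For the main case $\finv(\sigma) - \finv(\pi) \ge 2$, Proposition~\ref{t.homotopy-type} gives two alternatives. If $\sigma$ is the join of $k$ atoms in $[\pi, \mu_0]$, then $\Delta((\pi,\sigma))$ is homotopy equivalent to the sphere $\mathbf{S}^{k-2}$, whose reduced Euler characteristic is $(-1)^{k-2} = (-1)^k$; Hall's theorem then yields $\mu(\pi,\sigma) = (-1)^k$. Otherwise $\Delta((\pi,\sigma))$ is contractible, so its reduced Euler characteristic vanishes, giving $\mu(\pi,\sigma) = 0$.

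The only subtlety I expect is the bookkeeping in the covering case, making sure that "join of one atom" is interpreted consistently with the formula; this is immediate from Lemma~\ref{t.injective}, which ensures that distinct singletons of atoms have distinct joins, so each atom equals the join of itself as a singleton. With this, the case analysis covers all pairs $\pi \preceq \sigma$ in $B_n$, completing the proof.
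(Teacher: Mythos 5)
Your proposal is correct and follows essentially the route the paper intends: the paper leaves the proof to the reader, citing the analogue of \cite[Corollary 3.2.8]{BB}, which is exactly this deduction from the homotopy-type statement via Hall's theorem ($\mu(\pi,\sigma)=\tilde{\chi}(\Delta((\pi,\sigma)))$), together with the trivial cases of rank difference $0$ and $1$. Your handling of those small cases ($k=0$ for $\pi=\sigma$ via the empty join, $k=1$ for a cover) is the right bookkeeping, though the appeal to Lemma~\ref{t.injective} there is unnecessary since the join of a singleton is trivially the element itself.
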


\medskip

The proofs of Proposition~\ref{t.homotopy-type} 
and Corollary~\ref{t.mobius} are along the lines of 
the analogous proofs for the symmetric group~\cite[Theorem 3.2.7 and Corollary 3.2.8]{BB},
and are left to the reader.

\smallskip

\subsection{Tits Property}\ \\

In this subsection it will be shown that maximal chains in $(B_n, \preceq)$
exhibit a Tits-type connectivity property.

Let $\pi, \sigma \in B_n$ such that $\pi \preceq \sigma$.
Each maximal chain in the interval $[\pi, \sigma]$ of $(B_n, \preceq)$ corresponds to
a unique word $w = s_{i_1} \cdots s_{i_d}$ of length $d = \finv(\sigma) - \finv(\pi)$
with letters $s_{i_j}$ in the alphabet $S_{2,n}$, such that
$\finv(\pi s_{i_1} \cdots s_{i_j}) - \finv(\pi) = j$ for all $1 \le j \le d$.

\begin{proposition}\label{t.Tits} {(Tits Property)}
Any two maximal chains in any interval $[\pi, \sigma]$ of $(B_n, \preceq)$ are connected
via the following pseudo-Coxeter moves on the corresponding words:
$$
b_i b_j \longleftrightarrow b_j b_i \qquad (1\le i < j\le n), \leqno(T1)
$$
$$
a_i b_j \longleftrightarrow b_j a_i \qquad (j\ne i, i+1), \leqno(T2)
$$
$$
a_i b_{i+1} \longleftrightarrow b_i a_i \qquad(1 \le i \le n-1), \leqno(T3)
$$
$$
a_i a_j \longleftrightarrow a_j a_i \qquad (|i-j|>1), \leqno(T4)
$$
and
$$
a_i a_{i+1} b_{i+1} a_i \longleftrightarrow a_{i+1} b_{i+1} a_i a_{i+1} \qquad(1 \le i \le n-1). \leqno(T5)
$$
\end{proposition}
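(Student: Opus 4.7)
The plan is to prove this by induction on $\ell:=\finv(\sigma)-\finv(\pi)$, in the spirit of the Matsumoto argument for Coxeter groups but adapted to our non-Coxeter generating set. As a preliminary observation, each of (T1)--(T4) is a direct restatement of (B2), (B6), (B8), (B4) respectively, and (T5) is an identity in $B_n$ by the calculation in the remark after Proposition~\ref{t.relation1}; so each move replaces a word with another that represents the same group element. I would then verify that the moves also preserve the property of being a saturated chain in $(B_n,\preceq)$ by computing, via Corollary~\ref{t.cover}, that $\finv$ increases by exactly $1$ at each intermediate vertex on both sides of each move.

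The base cases $\ell\le 1$ are trivial. For $\ell\ge 2$, take two maximal chains in $[\pi,\sigma]$ with initial letters $s$ and $t$. If $s=t$, apply the inductive hypothesis to $[\pi s,\sigma]$. If $s\ne t$, I would reduce everything to the following local confluence lemma: whenever $\pi\lessdot\pi s$ and $\pi\lessdot\pi t$ in $(B_n,\preceq)$, there exist a common upper bound $\rho\in[\pi,\sigma]$ (guaranteed by Proposition~\ref{t.lattice}, since $\pi s,\pi t\preceq\sigma$) and saturated chains $D_s,D_t$ from $\pi$ to $\rho$ starting with $s$ and $t$ respectively, whose words are related by a single one of the moves (T1)--(T5). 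Granted this, choose a maximal chain $E$ from $\rho$ to $\sigma$; the inductive hypothesis applied to $[\pi s,\sigma]$ connects the original $s$-chain by moves to the concatenation $D_s\cdot E$, and likewise the $t$-chain to $D_t\cdot E$, while the confluence move connects $D_s\cdot E$ to $D_t\cdot E$.

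The confluence lemma is proved by case analysis, using Corollary~\ref{t.cover} to see when two covers coexist. Pairs of $b$'s commute, giving a length-$2$ diamond via (T1). The pair $s=a_i,\,t=b_j$ with $j\ne i,i+1$ is handled by (T2); the case $j=i+1$ is vacuous (the cover conditions force $i+1$ both to be and not to be in $\Neg(\pi)$), and $j=i$ is handled by (T3) via the identity $\pi a_i b_{i+1}=\pi b_i a_i$ after checking that both chains actually consist of covers. The pair $s=a_i,\,t=a_j$ with $|i-j|>1$ is handled by (T4). The only remaining case is $s=a_i,\,t=a_{i+1}$: here the cover conditions force $i+1,i+2\in\Neg(\pi)$ with $|\pi(i)|<|\pi(i+1)|<|\pi(i+2)|$, and one checks step by step that the two length-$4$ words of (T5) both encode saturated chains from $\pi$ to a common endpoint, each raising $\finv$ by $1$ at every intermediate cover. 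The main obstacle is exactly this last, non-rhombus case: it is what forces the introduction of the length-$4$ move (T5), and verifying that both sides satisfy the cover conditions at every intermediate step requires a careful simultaneous accounting of signs, absolute values, and permutation inversions across four consecutive covers.
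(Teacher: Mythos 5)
Your proposal follows essentially the same route as the paper: a Matsumoto-style induction on $\finv(\sigma)-\finv(\pi)$, with the case of distinct first letters reduced to a local confluence statement at the join $\pi s\vee\pi t$, which is exactly the paper's Lemma~\ref{t.2-atoms} (classifying the two maximal chains of $[\pi,\pi s\vee\pi t]$ case by case via Corollary~\ref{t.cover}, including the vacuous pair $(a_i,b_{i+1})$ and the length-$4$ case $(a_i,a_{i+1})$ that forces (T5)). The argument is correct and matches the paper's proof in structure and substance.
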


In order to prove that we first classify maximal chains in certain special intervals.


\begin{lemma}\label{t.2-atoms}
Let $\pi\in B_n$, $s, s'\in S_{2,n}$, $s \ne s'$ such that both $\pi s$ and $\pi s'$ cover $\pi$.
Then:
\begin{itemize}
\item[(i)]
The interval $[\pi, \pi s \vee \pi s']$ contains exactly two maximal chains,
one described by a word starting with $s$ and one by a word starting with $s'$.

\item[(ii)]
The above words are independent of $\pi$, as long as both $\pi s$ and $\pi s'$ cover $\pi$.
Denote by $\alpha(s,s')$ the word corresponding to the chain starting with $s$.

\item[(iii)]
The complete list of words corresponding to maximal chains in
intervals of the form $[\pi, \pi s \vee \pi s']$ in $(B_n, \preceq)$ is:
\[
\alpha(b_i,b_j)=b_i b_j \qquad (i \ne j);
\]
\[
\alpha (a_i, b_j) = a_i b_j, \quad 
\alpha (b_j, a_i) =  b_j a_i \qquad (j \ne i,i+1);
\]
\[
\alpha (a_i, b_i)= a_i b_{i+1},\quad 
\alpha (b_i, a_i)= b_i a_i \qquad (1 \le i \le n-1);
\]
\[
\alpha(a_i, a_j)= a_i a_j \qquad (|i-j|>1);
\]
and
\[
\alpha(a_i, a_{i+1})= a_i a_{i+1} b_{i+1} a_i, \quad 
\alpha(a_{i+1}, a_i)= a_{i+1} b_{i+1} a_i a_{i+1} \qquad (1 \le i \le n-2).
\]
$\alpha (a_i, b_{i+1})$ and $\alpha (b_{i+1}, a_i)$ do not exist, since
$\pi a_i$ and $\pi b_{i+1}$ cannot cover $\pi$ simultaneously.
\end{itemize}

\end{lemma}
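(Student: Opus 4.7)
The plan is to prove (i), (ii), and (iii) simultaneously by case analysis on the pair $(s,s')$. First I would rule out $(s,s') = (a_i, b_{i+1})$: by Corollary~\ref{t.cover}, the cover $\pi \lessdot \pi a_i$ requires $i+1 \in \Neg(\pi)$, whereas $\pi \lessdot \pi b_{i+1}$ requires $i+1 \notin \Neg(\pi)$. For each remaining pair I would exhibit two explicit words $w_1, w_2$ starting with $s, s'$ respectively that trace length-$d$ chains from $\pi$; each individual step is verified to be a cover via Corollary~\ref{t.cover}, the identity $\pi w_1 = \pi w_2$ is deduced from the defining relations (B1)--(B8) of Proposition~\ref{t.relation1}, and the common endpoint is identified with $\pi s \vee \pi s'$ using Proposition~\ref{t.criterion} or Lemma~\ref{t.meet_join}(ii). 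Its rank distance from $\pi$ then coincides with the common length of $w_1$ and $w_2$.

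For the pairs $(b_i, b_j)$ with $i \ne j$, $(a_i, b_j)$ with $j \ne i, i+1$, $(a_i, b_i)$, and $(a_i, a_j)$ with $|i-j| > 1$, the join differs from $\pi$ by two in flag inversion, and the length-$2$ words listed in (iii) come directly from relations (B2), (B6), (B8), and (B4) respectively. Since the interval has rank $2$, every maximal chain is determined by its first step, an atom, and there are exactly two such atoms, namely $\pi s$ and $\pi s'$; so exactly two maximal chains exist in these cases.

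The main obstacle is the braid case $(a_i, a_{i+1})$. The cover conditions force $i+1, i+2 \in \Neg(\pi)$ and $|\pi(i)| < |\pi(i+1)| < |\pi(i+2)|$; then the join in $S_n$ is $|\pi| s_i s_{i+1} s_i$, and tracking signs via Lemma~\ref{t.meet_join}(ii) yields $\finv(\pi a_i \vee \pi a_{i+1}) - \finv(\pi) = 4$, so candidate chains have length $4$. I would propose the two words $a_i a_{i+1} b_{i+1} a_i$ and $a_{i+1} b_{i+1} a_i a_{i+1}$; each intermediate step is checked to be a cover by Corollary~\ref{t.cover} (tracking signs and sizes at positions $i, i+1, i+2$ through the chain), and equality of their images is obtained by using (B5), (B6), (B7) to rewrite both sides as
\[
b_{i+2} a_i a_{i+1} a_i.
\]
To rule out any further maximal chains in $[\pi, \pi a_i \vee \pi a_{i+1}]$ I would apply Corollary~\ref{t.cover} inductively along an arbitrary chain: at every intermediate element, the combined requirements of giving a cover and of remaining in the interval (via Proposition~\ref{t.criterion}) force a unique admissible next generator. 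Part (ii) is then clear, as the construction depends only on $(s,s')$ and on the universal relations (B1)--(B8), not on the specific choice of $\pi$.
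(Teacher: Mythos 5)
Your proposal is correct and follows essentially the same route as the paper: rule out the pair $(a_i,b_{i+1})$ via Corollary~\ref{t.cover}, handle the four rank-$2$ cases with explicit length-$2$ words coming from the defining relations, and treat the braid case $(a_i,a_{i+1})$ as a rank-$4$ interval realized by the words $a_i a_{i+1} b_{i+1} a_i$ and $a_{i+1} b_{i+1} a_i a_{i+1}$. The only minor divergence is how extra chains are excluded in the braid case: the paper projects to $S_n$, notes that the only reduced words there are $s_i s_{i+1} s_i$ and $s_{i+1} s_i s_{i+1}$, and then argues that a single letter $b_k$ must be inserted, whereas you propose a direct step-by-step elimination using Corollary~\ref{t.cover} and Proposition~\ref{t.criterion}; both arguments go through.
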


\noindent
{\it Proof of Lemma~\ref{t.2-atoms}.}
First of all, $\pi a_i$ and $\pi b_{i+1}$ cannot cover $\pi$ simultaneously since,
by Corollary~\ref{t.cover},
\[
\pi a_i \text{\ covers } \pi
\,\Longrightarrow\, \pi(i+1) < 0
\,\Longrightarrow\, \pi b_{i+1}\prec \pi.
\]

We shall deal with the other cases one by one.

\smallskip

If $i \ne j$ then, by Corollary~\ref{t.cover},
$\pi b_i$ and $\pi b_j$ cover $\pi$ if and only if $\pi(i) > 0$ and $\pi(j) > 0$. 
Then $\pi b_i\vee \pi b_j = \pi b_i b_j$, and indeed
$\alpha(b_i, b_j) = b_i b_j$ is unique and independent of $\pi$.

\smallskip

If $j \ne i,i+1$ then, by Corollary~\ref{t.cover},
$\pi a_i$ and $\pi b_j$ cover $\pi$ if and only if $\pi(i+1) < 0$, $|\pi(i)| < |\pi(i+1)|$ and $\pi(j) > 0$.
Then $\pi a_i\vee \pi b_j = \pi a_i b_j = \pi b_j a_i$, so that
$\alpha(a_i, b_j) = a_i b_j$ and $\alpha(b_j, a_i) = b_j a_i$ are clearly unique and independent of $\pi$.

\smallskip

If $\pi a_i$ and $\pi b_i$ cover $\pi$ (for some $1 \le i \le n-1$) then, by Corollary~\ref{t.cover},
$\pi(i) > 0$, $\pi(i+1) < 0$ and $|\pi(i)| < |\pi(i+1)|$.
Then $\pi a_i \vee \pi b_i= \pi b_i a_i=\pi a_i b_{i+1}$, and again
$\alpha(a_i, b_i) = a_i b_{j+1}$ and $\alpha(b_i, a_i) = b_i a_i$ are unique and independent of $\pi$.

\smallskip

Similarly, if $|i-j|>1$ then
$\pi a_i$ and $\pi a_j$ cover $\pi$ if and only if
$\pi(i+1) < 0$, $|\pi(i)| < |\pi(i+1)|$, $\pi(j+1) < 0$ and $|\pi(j)| < |\pi(j+1)|$.
Then $\pi a_i\vee \pi a_j = \pi a_i a_j$, so that
$\alpha(a_i, a_j) = a_i a_j$ is clearly unique and independent of $\pi$.

\smallskip

In all of the above cases, the interval $[\pi, \pi s \vee \pi s']$ is of length $2$.
It is easy to identify which generators $a_k$ appear in a maximal chain,
and the rest readily follows.

\smallskip

Let us now turn to the last case, and assume that $\pi a_i$ and $\pi a_{i+1}$ cover $\pi$,
for some $1 \le i \le n-2$.
By Corollary~\ref{t.cover},
$\pi(i+1) < 0$, $\pi(i+2) < 0$ and $|\pi(i)| < |\pi(i+1)| < |\pi(i+2)|$.
By Lemma~\ref{t.meet_join}(ii), $\sigma := \pi a_i \vee \pi a_{i+1}$ satisfies
$|\sigma| = |\pi| s_i s_{i+1} s_i$ and
$\Pos(\sigma^{-1}) = \Pos(\pi^{-1}) \cup \{|\pi(i+1)|, |\pi(i+2)|\}$.
Thus $\finv(\sigma) - \finv(\pi) = 2 \cdot 3 - 2 =4$.
The only maximal chains in $S_n$ from $|\pi|$ to $|\sigma|$ correspond to
$s_i s_{i+1} s_i$ and $s_{i+1} s_i s_{i+1}$, and thus
each maximal chain from $\pi$ to $\sigma$ must correspond to either
$a_i a_{i+1} a_i$ or $a_{i+1} a_i a_{i+1}$, with one additional letter of type $b_k$.
It is easy to see that the only possibilities are
$\alpha(a_i, a_{i+1})= a_i a_{i+1} b_{i+1} a_i$ and
$\alpha(a_{i+1}, a_i)= a_{i+1} b_{i+1} a_i a_{i+1}$.



\qed

\medskip

\noindent
{\it Proof of Proposition~\ref{t.Tits}.}
The proof is similar to the  analogous proof for the symmetric
group~\cite[ Theorem 3.3.1]{BB},
and proceeds by induction on the difference
between the ranks of the top and bottom elements.


If the difference is zero then the statement obviously holds.

Assume that the difference is $k>0$. Consider two 
maximal chains in the interval $[\pi,\sigma]$, corresponding to the words
$s s_2 \cdots s_k$ and $s' s_2' \cdots s_k'$; all letters are in $S_{2,n}$.
Thus
\[
\sigma=\pi s s_2 \cdots s_k = \pi s' s_2'\cdots s_k'.
\]

If $s = s'$ then the statement holds by the induction hypothesis for the interval $[\pi s, \sigma]$.

If $s\ne s'$ then $\pi s\preceq \sigma$ and $\pi s'\preceq \sigma$.
By the lattice property, $\pi s\vee \pi s'\preceq \sigma$.
By Lemma~\ref{t.2-atoms} there exists a maximal chain in the interval $[\pi,\pi s\vee \pi s']$
corresponding to the word $\alpha(s, s')$ starting with $s$.
It can be extended to a maximal chain in $[\pi, \sigma]$ corresponding to the word $\alpha(s, s')\beta$,
where $\beta$ corresponds to some maximal chain in $[\pi s\vee \pi s', \sigma]$.
Both words $s s_2 \cdots s_k$ and $\alpha(s, s')\beta$ start with $s$.
By the induction hypothesis for $[\pi s, \sigma]$,
it is possible to transform $s s_2 \cdots s_k$ into $\alpha(s, s')\beta$ using the moves $(T1)-(T5)$.
By the same argument for $s'$,
it is possible to transform $\alpha(s', s)\beta$ into $s' s_2' \cdots s_k'$ using the moves $(T1)-(T5)$.
Finally, by Lemma~\ref{t.2-atoms},
it is possible to transform $\alpha(s,s')\beta$ into $\alpha (s',s)\beta$ using one of the moves $(T1)-(T5)$,
thus completing the proof.

\qed

\section{Bivariate Distribution}
Let
\[
E_n(t) := \sum\limits_{\pi\in S_n} t^{\des(\pi)}
\]
be the {\em Eulerian Polynomial}. More generally, let
\[
S_n(q,t) := \sum\limits_{\pi\in S_n} q^{\inv(\pi)}t^{\des(\pi)}.
\]
Recall that $(B_n,\preceq)$ is graded by $\finv$.

\begin{defn}\label{df-wdes}
For every $\pi\in B_n$ let $\wdes(\pi)$ be the number of elements
in $B_n$ which are covered by $\pi$ in the poset $(B_n,\preceq)$.
\end{defn}


\begin{lemma}\label{wdes}
For every $\pi\in B_n$
$$
\wdes(\pi)=\#\left( \D(|\pi|)\cup \Neg(\pi) \right).
$$
\end{lemma}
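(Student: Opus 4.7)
The plan is to translate Corollary~\ref{t.cover} into a description of elements \emph{covered by} $\pi$, and then match the resulting count against $|\D(|\pi|)\cup\Neg(\pi)|$ by inclusion--exclusion. If $\tau \lessdot \pi$ in $(B_n,\preceq)$ then $\pi=\tau s$ for a unique generator $s\in S_{2,n}$; equivalently $\tau=\pi s^{-1}$, and distinct admissible generators yield distinct predecessors. So $\wdes(\pi)$ equals the number of $s\in S_{2,n}$ for which $\pi$ covers $\pi s^{-1}$, and it suffices to enumerate the two generator types.

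For $s=b_i$ we have $s^{-1}=b_i$; a direct check shows that right-multiplication by $b_i$ flips the sign of $\pi$ at position $i$, so $\Neg(\pi b_i)$ is the symmetric difference of $\Neg(\pi)$ and $\{i\}$. Corollary~\ref{t.cover}(i) applied to $\tau=\pi b_i$ requires $i\notin\Neg(\tau)=\Neg(\pi b_i)$, which is equivalent to $i\in\Neg(\pi)$. This contributes $|\Neg(\pi)|$ predecessors.

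For $s=a_i$ with $1\le i\le n-1$ the key step is to compute $a_i^{-1}$. Since $a_i^4=1$ by relation $(A1)$, one has $a_i^{-1}=a_i^3\ne a_i$; inspection of the defining one-line notation gives $a_i^{-1}=[1,\ldots,i-1,\,i+1,\,-i,\,i+2,\ldots,n]$. Consequently $\tau:=\pi a_i^{-1}$ satisfies $\tau(i)=\pi(i+1)$, $\tau(i+1)=-\pi(i)$, and $\tau(j)=\pi(j)$ for $j\ne i,i+1$. Corollary~\ref{t.cover}(ii) then demands $i+1\in\Neg(\tau)$ and $|\tau(i)|<|\tau(i+1)|$, which translate respectively to $\pi(i)>0$ and $|\pi(i)|>|\pi(i+1)|$. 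Hence this contributes
$$|\{i\in[n-1] : \pi(i)>0 \text{ and } |\pi(i)|>|\pi(i+1)|\}| \;=\; |\D(|\pi|)\setminus\Neg(\pi)|$$
additional predecessors.

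Summing the two counts and observing that $\Neg(\pi)$ and $\D(|\pi|)\setminus\Neg(\pi)$ are disjoint with union $\D(|\pi|)\cup\Neg(\pi)$ yields
$$\wdes(\pi) \;=\; |\Neg(\pi)| + |\D(|\pi|)\setminus\Neg(\pi)| \;=\; |\D(|\pi|)\cup\Neg(\pi)|,$$
as claimed. The main obstacle is keeping track of $a_i^{-1}$ carefully: since $a_i$ has order $4$, its inverse acts by swapping positions $i,i+1$ and sign-flipping the new value at position $i+1$, rather than at position $i$ as $a_i$ itself does. Once this inversion is handled correctly, the rest is a routine inclusion--exclusion computation.
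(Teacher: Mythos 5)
Your proof is correct and follows essentially the same route as the paper's: both apply Corollary~\ref{t.cover} with the roles of $\pi$ and $\sigma$ interchanged, identify the predecessors as $\{\pi b_i : i\in\Neg(\pi)\}\cup\{\pi a_i^{-1} : i\in\D(|\pi|)\setminus\Neg(\pi)\}$, and add the two disjoint counts. Your explicit computation of $a_i^{-1}$ and the translation of the cover conditions is just a more detailed write-up of the step the paper leaves implicit.
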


\begin{proof}
By Corollary~\ref{t.cover} (with $\pi$ and $\sigma$ interchanged),
$\sigma$ is covered by $\pi$ in $(B_n, \preceq)$ if and only if
\begin{itemize}
\item[(i)] there exists $1\le i\le n$, such that
$$
i\in \Neg(\pi) \ \ \ \ \ \ \ \ {\rm{and}}\ \ \ \ \ \ \ \
\sigma=\pi b_i;
$$
or \item[(ii)] there exists $1 \le i \le n-1$, such that
$$
i\not\in \Neg(\pi), \ \ \ \ \ |\pi(i)|>|\pi(i+1)| \ \ \ \ \ \ \ \
{\rm{and}}\ \ \ \ \ \ \ \ \sigma=\pi a_i^{-1}.
$$
\end{itemize}
Hence, the set of elements which are covered by $\pi$ in $(B_n,
\preceq)$ is
$$
\{\pi b_i:\ i\in \Neg(\pi)\} \cup \{\pi a_i^{-1}:\ i\in
\D(|\pi|)\setminus \Neg(\pi)\},
$$
a disjoint union.

It follows that
$$
\wdes(\pi)= \#\Neg(\pi)+ \#(\D(|\pi|)\setminus\Neg(\pi)) =
\# (\D(|\pi|)\cup \Neg(\pi)).
$$

\end{proof}

\begin{proposition}
For every $n$,
\[
\sum\limits_{\pi\in B_n} t^{\wdes(\pi)}=
(1+t)^n\cdot E_n \left(\frac{2t}{1+t}\right)
\]
and
\[
\sum\limits_{\pi\in B_n} q^{\finv(\pi)} t^{\wdes(\pi)}=
(1+qt)^n\cdot S_n \left(q^2,\frac{(1+q)t}{1+qt}\right).
\]
\end{proposition}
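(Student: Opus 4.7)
The plan is to use Lemma~\ref{wdes} to reduce both identities to a position-by-position factorization over the sign vector. Writing $\pi = ((c_1,\ldots,c_n),\sigma)\in B_n$ with $\sigma=|\pi|$, we have $\Neg(\pi)=\{i:c_i=1\}$ and $\finv(\pi)=2\inv(\sigma)+\#\Neg(\pi)$, while Lemma~\ref{wdes} gives
\[
\wdes(\pi)=\#\bigl(\D(\sigma)\cup\{i:c_i=1\}\bigr).
\]
Thus the sum over $B_n$ factors as a double sum: first over $\sigma\in S_n$ (encoding $\inv(\sigma)$ and $D:=\D(\sigma)$), and then over $c=(c_1,\ldots,c_n)\in\{0,1\}^n$.

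For fixed $\sigma$, the inner sum factors as a product over $i\in[n]$ because the quantities $c_i$, $q^{c_i}$, and the indicator $[i\in D\cup\{j:c_j=1\}]$ all depend only on position $i$. Specifically, contribution of position $i$ to $t^{\#(D\cup N)}$ is $t$ if $i\in D$ (forced, regardless of $c_i$) and is $t^{c_i}$ if $i\notin D$. For the univariate identity, summing $c_i\in\{0,1\}$ gives factor $2t$ when $i\in D$ and $1+t$ when $i\notin D$, so
\[
\sum_{c\in\{0,1\}^n} t^{\wdes(\pi)} = (2t)^{\des(\sigma)}(1+t)^{n-\des(\sigma)}.
\]
Summing over $\sigma\in S_n$ and factoring out $(1+t)^n$ yields
\[
\sum_{\pi\in B_n} t^{\wdes(\pi)} = (1+t)^n\sum_{\sigma\in S_n}\left(\frac{2t}{1+t}\right)^{\des(\sigma)} = (1+t)^n E_n\!\left(\tfrac{2t}{1+t}\right).
\]

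For the bivariate identity, I track $q^{\finv(\pi)}=q^{2\inv(\sigma)}\cdot q^{\#N}$. The factor $q^{\#N}$ combines with $t^{[i\in D\cup N]}$ position by position: when $i\in D$, summing $c_i\in\{0,1\}$ contributes $t+qt=t(1+q)$, while when $i\notin D$ it contributes $1+qt$. Hence
\[
\sum_{c\in\{0,1\}^n} q^{\#N} t^{\#(D\cup N)} = \bigl(t(1+q)\bigr)^{\des(\sigma)}(1+qt)^{n-\des(\sigma)},
\]
and substituting into the double sum and factoring $(1+qt)^n$ gives
\[
\sum_{\pi\in B_n} q^{\finv(\pi)}t^{\wdes(\pi)} = (1+qt)^n\sum_{\sigma\in S_n} q^{2\inv(\sigma)}\left(\frac{(1+q)t}{1+qt}\right)^{\des(\sigma)} = (1+qt)^n S_n\!\left(q^2,\tfrac{(1+q)t}{1+qt}\right).
\]

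There is no real obstacle here; the only point that needs care is the bookkeeping of the indicator $[i\in D\cup N]$, which must be handled in the two cases $i\in D$ versus $i\notin D$ to get the correct per-position factors. Everything else follows from the descent-negation decomposition of $\wdes$ supplied by Lemma~\ref{wdes} and from the additivity of $\finv$ over $\inv(|\pi|)$ and $\nega(\pi)$.
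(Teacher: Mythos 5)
Your proposal is correct and follows essentially the same route as the paper: both invoke Lemma~\ref{wdes}, decompose $B_n$ as pairs $(\sigma,c)\in S_n\times\ZZ_2^n$, and evaluate the inner sum over sign vectors via the same case split ($i\in\D(\sigma)$ versus $i\notin\D(\sigma)$), yielding the factors $(2t)^{\des}(1+t)^{n-\des}$ and $((1+q)t)^{\des}(1+qt)^{n-\des}$ exactly as in the paper's computation.
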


\begin{rem}
By a well known result of Stanley~\cite{Stanley79},
$S_n(q,t)$ has an elegant $q$-exponential generating function.
It follows that the same is true when the pair $(\finv, \wdes)$ is used instead of $(\inv, \des)$.
\end{rem}

\begin{proof}
$\ZZ_2^n$ and $S_n$ can be viewed as subgroups of $B_n$,
restricting elements $\pi \in B_n$ to have $|\pi| = id$ or $\pi(i) > 0$ ($\forall i$), respectively.
Moreover, every $\pi\in B_n$ can be written in the form $\pi=vu$ for some $u\in \ZZ_2^n$ and
$v=|\pi|\in S_n$. Hence
\[
\sum\limits_{\pi\in B_n} t^{\wdes(\pi)} =
\sum\limits_{u\in \ZZ_2^n} \sum\limits_{v\in S_n} t^{\wdes(vu)}.
\]
By Lemma~\ref{wdes}, the right hand side is equal to
\[
\sum\limits_{u\in \ZZ_2^n} \sum\limits_{v\in S_n} t^{\# (\D(v)\cup \Neg(u))} =
\sum\limits_{v\in S_n} \sum\limits_{u\in \ZZ_2^n}
t^{\# \D(v)} t^{\# (\Neg(u)\setminus \D(v))}
\]
\[
= \sum\limits_{v\in S_n} t^{\# \D(v)} \sum\limits_{u \in \ZZ_2^n} t^{\# (\Neg(u)\setminus \D(v))}
= \sum\limits_{v\in S_n} t^{\# \D(v)} 2^{\#\D(v)} (1+t)^{n-\#\D(v)}
\]
\[
= (1+t)^n\sum\limits_{v\in S_n} \left( \frac{2t}{1+t} \right)^{\# \D(v)}
= (1+t)^n\cdot E_n \left(\frac{2t}{1+t}\right).
\]
The proof of the second identity is similar.
\begin{eqnarray*}
\sum\limits_{\pi\in B_n} q^{\finv(\pi)} t^{\wdes(\pi)}
& = & \sum\limits_{u\in \ZZ_2^n} \sum\limits_{v\in S_n} q^{\finv(vu)} t^{\wdes(vu)}\\
& = & \sum\limits_{u\in \ZZ_2^n} \sum\limits_{v\in S_n} q^{2\cdot \inv(v)+\# \Neg(u)}t^{\# (\D(v)\cup \Neg(u))}\\
& = & \sum\limits_{v\in S_n} q^{2\cdot \inv(v)} t^{\# \D(v)}
\sum\limits_{u\in \ZZ_2^n}q^{\#\Neg(u)} t^{\# (\Neg(u)\setminus \D(v))}\\
& = & \sum\limits_{v\in S_n} q^{2\cdot \inv(v)} t^{\# \D(v)} (1+q)^{\#\D(v)} (1+qt)^{n-\#\D(v)}\\
& = & (1+qt)^n\sum\limits_{v\in S_n} q^{2 \cdot \inv(v)} \left(\frac{(1+q)t}{1+qt}\right)^{\# \D(v)}\\
& = & (1+qt)^n\cdot S_n \left(q^2,\frac{(1+q)t}{1+qt}\right).
\end{eqnarray*}

\end{proof}




\section{Wreath Products}\label{wreath}

The above results generalize to the group $G(r,n):=\ZZ_r\wr S_n$,
for every positive integer $r$. Proofs are similar and will be left to the reader.



For $1\le i\le n$ define the vector $d_i := (\delta_{i1}, \ldots, \delta_{in}) \in \ZZ_r^n$,
where
\[
\delta_{ij} =
\begin{cases}
1, & \text{if $i=j$;}\\
0, & \text{otherwise.}
\end{cases}
\]
Let
\[
a_i:= (d_i, s_i) \qquad (1 \le i \le n-1)
\]
and
\[
b_i:=(d_i, id) \qquad (1 \le i \le n).
\]

\begin{proposition}\label{t.relation1-r}
The wreath product $G(r,n)=\ZZ_r\wr S_n$ is 
generated by the set $S_{r,n}:=\{a_i\,:\, 1 \le i \le n-1\} \cup \{b_i\,:\, 1 \le i \le n\}$
with defining relations  $(B1) - (B8)$ of Proposition~\ref{t.relation1}, except that 
relation $(B1)$ is replaced by
\[
b_i^r=1 \qquad (1 \le i \le n), \leqno(B1_r)
\]
\end{proposition}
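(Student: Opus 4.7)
The plan is to mimic the proof of Proposition~\ref{t.relation1} almost verbatim; only the verification of relations and the final order bound need to be adjusted from $r=2$ to general $r$.

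First I would let $\hat G_{r,n}$ be the abstract group with the given presentation, and define a map $\phi$ from the free group on $a_1,\ldots,a_{n-1},b_1,\ldots,b_n$ to $G(r,n)$ by $\phi(a_i):=(d_i,s_i)$ and $\phi(b_i):=(d_i,\text{id})$. The relations must then be checked against the wreath-product multiplication rule recalled in Section~2. For example, $\phi(b_i)^r=(r\cdot d_i,\text{id})=(0,\text{id})=e$, so $(B1_r)$ holds; $(B2)$ holds because elements of $\ZZ_r^n$ commute; a direct computation using $((c_1,\ldots,c_n),\sigma)\cdot((d_1,\ldots,d_n),\tau)=((c_{\tau(1)}+d_1,\ldots,c_{\tau(n)}+d_n),\sigma\tau)$ gives $\phi(a_i)^2=(d_i+d_{i+1},\text{id})=\phi(b_i b_{i+1})$, establishing $(B3)$; and $(B4)$--$(B8)$ follow from similar short computations, using that $s_i$ acts on color vectors by permuting coordinates. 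This yields a well-defined group homomorphism, still denoted $\phi : \hat G_{r,n} \to G(r,n)$.

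Next I would establish surjectivity. A direct computation gives $\phi(a_i)\phi(b_i)^{-1}=(d_i,s_i)(-d_i,\text{id})=(0,s_i)$, so the Coxeter generators of the $S_n$ factor lie in the image of $\phi$. Since $\{\phi(b_i):1\le i\le n\}$ generates $\ZZ_r^n\times\{\text{id}\}$ and $G(r,n)=\ZZ_r^n\rtimes S_n$, the homomorphism $\phi$ is surjective, so $|\hat G_{r,n}|\ge r^n n!$.

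It then remains to prove $|\hat G_{r,n}|\le r^n n!$. Let $\hat N_{r,n}$ be the subgroup of $\hat G_{r,n}$ generated by $b_1,\ldots,b_n$. By $(B2)$ it is commutative, and by $(B1_r)$ each generator has order dividing $r$, so $|\hat N_{r,n}|\le r^n$. Relations $(B6)$--$(B8)$, rewritten as $a_i^{-1}b_j a_i\in\hat N_{r,n}$ for all admissible $i,j$, show that $\hat N_{r,n}$ is normal in $\hat G_{r,n}$. In the quotient $\hat G_{r,n}/\hat N_{r,n}$, relation $(B3)$ becomes $\bar a_i^2=1$, while $(B4)$--$(B5)$ become the braid relations, so the quotient is a homomorphic image of $S_n$ and has order at most $n!$. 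Combining gives $|\hat G_{r,n}|\le r^n n! = |G(r,n)|$, so $\phi$ is an isomorphism. The main task is the mechanical verification of $(B1_r)$ and $(B2)$--$(B8)$ under $\phi$; I expect no genuine obstacle, and in fact the structural argument is cleaner than in Proposition~\ref{t.relation1}, since commutativity of $\hat N_{r,n}$ now follows immediately from $(B2)$ without needing an analogue of the manipulation used for $\hat B_n^+$ in Proposition~\ref{t.relation2}.
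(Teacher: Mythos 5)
Your proof is correct and is precisely the adaptation the paper intends: Section~7 explicitly states that the proofs for general $r$ are similar to the $B_n$ case and leaves them to the reader, and your argument mirrors the proof of Proposition~\ref{t.relation1} step by step (well-definedness of $\phi$ by direct verification of the relations, surjectivity via $\phi(a_i)\phi(b_i)^{-1}=(0,s_i)$ together with the semidirect product structure, and the upper bound $r^n n!$ via the abelian normal subgroup $\widehat{N}_{r,n}$ and the $S_n$-quotient). Your closing remark that commutativity of $\widehat{N}_{r,n}$ now follows immediately from $(B2)$ is also accurate and matches the paper's own comment that the full-group argument is simpler than the alternating-subgroup one.
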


\medskip

Recall Definition~\ref{finv} of the flag inversion number.

\begin{defn}\label{weak-definition-r}
The {\em flag (right) weak order} on $G(r,n)$, $\preceq$, is the
reflexive and transitive closure of
the relation
$$\pi \lessdot 
\pi s\ \ \ \ \  \Longleftrightarrow\ \ \ \ \  \pi\in G(r,n), s \in
S_{r,n}\ \ \ \text{ and }\ \ \ \finv(\pi)<\finv(\pi s).$$
\end{defn}

\medskip

\begin{proposition}\label{t.basic-r}
The poset $(G(r,n),\preceq)$ is
\begin{itemize}
\item[(i)]
ranked (by flag inversion number);

\item[(ii)]
self-dual (by $\pi\mapsto \bar\pi \mu_0$, where
$\mu_0 = ((r-1, \ldots, r-1), [n, \ldots, 1])$
is the unique maximal element in this order and
$\bar\pi = ((-c_1, \ldots, -c_n), \tau)$ when
$\pi = ((c_1, \ldots, c_n), \tau)$);\ and

\item[(iii)]
rank-symmetric and unimodal.

\end{itemize}
\end{proposition}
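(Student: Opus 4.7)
The plan is to adapt each part of Proposition~\ref{t.properties} to general $r$, using the presentation of Proposition~\ref{t.relation1-r}, which differs from that of Proposition~\ref{t.relation1} only in the replacement of $b_i^2 = 1$ by $b_i^r = 1$; the structural arguments are unchanged and only the bookkeeping needs to be extended.

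For (i), I would tabulate $\finv(\pi s) - \finv(\pi)$ for $s \in S_{r,n}$. For $s = b_i$ the change is $+1$ when $c_i < r-1$ and $-(r-1)$ when $c_i = r-1$. For $s = a_i$ it takes one of the values $r+1,\, 1,\, 1-r,\, 1-2r$, depending on the relative order of $|\pi(i)|$ and $|\pi(i+1)|$ and on whether $c_{i+1} = r-1$. The only positive non-unit value is $r+1$, arising when $|\pi(i)| < |\pi(i+1)|$ and $c_{i+1} < r-1$. In that case, iterating $(B7)$ gives $a_i b_i^k = b_{i+1}^k a_i$ for all $k$, which combined with $b_i^r = 1$ yields the identity
\[
a_i \;=\; b_{i+1}^{r-1-c_{i+1}} \cdot a_i \cdot b_i^{c_{i+1}+1}
\]
in $G(r,n)$. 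Reading this as a factorization of $\pi a_i$, each of the $r+1$ resulting single-step moves is easily checked to be a cover: the initial $b_{i+1}$-steps raise the color at position $i+1$ from $c_{i+1}$ up to $r-1$, the middle $a_i$-step is then of the wrap-around type (contributing $+1$), and the final $b_i$-steps raise the new color at position $i$ from $0$ up to $c_{i+1}+1$.

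For (ii), the identity $\finv(\bar\pi\mu_0) = \finv(\mu_0) - \finv(\pi)$ is immediate from $\inv(|\pi|\mu) = \binom{n}{2} - \inv(|\pi|)$ and from the colors of $\bar\pi\mu_0$ at position $i$ being $r-1-c_{n+1-i}$. Since the bar map is a group automorphism and $\bar\mu_0 = \mu_0^{-1}$, the map $\pi\mapsto\bar\pi\mu_0$ is an involution. For reversal of cover relations, for $\sigma = \pi s$ one has $\bar\pi\mu_0 = \bar\sigma\mu_0 \cdot \tilde s$ with $\tilde s := \mu_0^{-1}\bar s^{-1}\mu_0$, and direct computation in the group law shows $\tilde s = b_{n+1-i}$ when $s = b_i$ and $\tilde s = a_{n-i}$ when $s = a_i$, both elements of $S_{r,n}$. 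Part (iii) then follows as in Proposition~\ref{t.properties}: rank-symmetry from (i) and (ii), and unimodality from (i) together with Proposition~\ref{finv-gf}, since $\prod_{i=1}^n [ri]_q$ is a product of symmetric unimodal polynomials.

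The main technical obstacle I anticipate is the explicit conjugation computation showing that $\mu_0^{-1}\bar s^{-1}\mu_0$ lies in $S_{r,n}$ when $s = a_i$: the bar in the definition of the duality map is essential here, because the unbarred conjugate $\mu_0^{-1} a_i^{-1}\mu_0$ does not itself belong to $S_{r,n}$ for $r > 2$. Once this identity is verified, the remainder of the proof is a direct transcription of the $B_n$ argument.
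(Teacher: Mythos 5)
Your proposal is correct and follows exactly the route the paper intends: the paper proves the $r=2$ case in Proposition~\ref{t.properties} and explicitly leaves the general-$r$ version to the reader, and your argument is the faithful generalization of that proof, with the factorization $a_i = b_{i+1}^{\,r-1-c_{i+1}}\,a_i\,b_i^{\,c_{i+1}+1}$ correctly extending the paper's $\sigma = \pi b_{i+1}a_ib_i$ step for the unique non-unit positive jump $r+1$, and the conjugation computation $\mu_0^{-1}\bar s^{-1}\mu_0 \in S_{r,n}$ correctly accounting for the bar that is invisible when $r=2$. All the computations check out.
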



\begin{proposition}\label{t.cover-r}
$\sigma$ covers $\pi$ in $(G(r,n), \preceq)$ if and only if either
\begin{itemize}
\item[(i)]
there exists $1\le i\le n$ such that
\[
c_i(\pi) \ne r-1 \ \ \ \ \ \ {\rm{and}}\ \ \ \ \ \  \sigma=\pi b_i;
\]
or
\item[(ii)]
there exists $1 \le i \le n-1$ such that
$$
c_{i+1}(\pi) = r-1, \ \ \ \ \ |\pi(i)|<|\pi(i+1)| \ \ \ \ \ \  {\rm{and}}\ \ \ \ \ \ \sigma=\pi a_i.
$$
\end{itemize}
\end{proposition}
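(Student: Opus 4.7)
The plan is to mimic the proof of Corollary~\ref{t.cover}, exploiting the fact that $(G(r,n), \preceq)$ is ranked by $\finv$ (Proposition~\ref{t.basic-r}(i)). Given the ranking, any covering pair $\pi \lessdot \sigma$ in $\preceq$ must arise from a single generating step $\sigma = \pi s$ with $s \in S_{r,n}$ and $\finv(\sigma) - \finv(\pi) = 1$: a longer generating chain would yield a strict intermediate, while a single step with $\finv$-jump exceeding $1$ is refinable by the ranking. Thus it suffices to compute $\finv(\pi s) - \finv(\pi)$ for each $s \in S_{r,n}$ and identify precisely when this difference equals $+1$.

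A direct computation from the group operation in $G(r,n)$ shows that $\pi b_i$ is obtained from $\pi = ((c_1, \ldots, c_n), |\pi|)$ by replacing $c_i$ with $c_i + 1 \pmod r$ and leaving $|\pi|$ unchanged. Hence $\inv(|\pi b_i|) = \inv(|\pi|)$, while $\n(\pi b_i) - \n(\pi)$ equals $+1$ when $c_i(\pi) \ne r-1$ and $-(r-1)$ when $c_i(\pi) = r-1$. So $\finv(\pi b_i) - \finv(\pi) = 1$ iff $c_i(\pi) \ne r-1$, which is case~(i).

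For $s = a_i$, the same computation yields
\[
\pi a_i = \bigl((c_1, \ldots, c_{i-1},\ c_{i+1}(\pi)+1,\ c_i(\pi),\ c_{i+2}, \ldots, c_n),\ |\pi|\, s_i\bigr),
\]
where the additions are mod $r$. Consequently $|\pi a_i| = |\pi|\, s_i$, so $\inv(|\pi a_i|) - \inv(|\pi|) = +1$ if $|\pi(i)| < |\pi(i+1)|$ and $-1$ otherwise; meanwhile $\n(\pi a_i) - \n(\pi)$ equals $+1$ when $c_{i+1}(\pi) \ne r-1$ and $-(r-1)$ when $c_{i+1}(\pi) = r-1$. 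Combining, $\finv(\pi a_i) - \finv(\pi)$ takes one of the four values $r+1,\ r-(r-1)=1,\ -r+1,\ -2r+1$, and it equals $+1$ precisely when $|\pi(i)| < |\pi(i+1)|$ and $c_{i+1}(\pi) = r-1$. This is case~(ii), and exhausts the cover relations.

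The main technical care required is in the $a_i$ computation: one must track the index shift caused by the permutation part of $a_i$ acting on the colors of $\pi$, so that the new color at position $i$ depends on $c_{i+1}(\pi)$ rather than on $c_i(\pi)$. Once this is set up, the identification of cover relations reduces to an arithmetic inspection of the four cases above, and the proposition follows.
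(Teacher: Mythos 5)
Your proposal is correct and follows essentially the same route as the paper: the paper derives the $B_n$ case (Corollary~\ref{t.cover}) from exactly this computation of $\finv(\pi s)-\finv(\pi)$ for each generator inside the proof of Proposition~\ref{t.properties}(i), and explicitly leaves the $G(r,n)$ generalization to the reader. Your bookkeeping of the color shift under $a_i$ and the four resulting increments $r+1$, $1$, $-r+1$, $-2r+1$ matches what that argument gives for general $r$.
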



\noindent
In the following statement, elements $-c_j(\pi^{-1}) = c_{|\pi^{-1}(j)|}(\pi) \in \ZZ_r$
are compared using the natural linear order $0 < 1 < \ldots < r-1$ on $\ZZ_r$.

\begin{proposition}\label{t.criterion-r}
For every $\pi, \sigma\in G(r,n)$,
\begin{eqnarray*} 
\pi \preceq \sigma &\Longleftrightarrow&
\Inv(|\pi^{-1}|) \subseteq \Inv(|\sigma^{-1}|) {\text{\rm \ \ and \ }}\\
\nonumber & & \{j \,:\, -c_j(\pi^{-1}) > -c_j(\sigma^{-1})\} \subseteq
\M\left[\Inv(|\sigma^{-1}|) \setminus \Inv(|\pi^{-1}|)\right].
\end{eqnarray*}
\end{proposition}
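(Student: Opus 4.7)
The plan is to mirror the proof of Proposition~\ref{t.criterion}, replacing the binary $\Neg$-condition by the linear-order condition on $-c_j(\pi^{-1}) = c_{|\pi^{-1}(j)|}(\pi) \in \ZZ_r$. As a sanity check, for $r = 2$ the inequality $-c_j(\pi^{-1}) > -c_j(\sigma^{-1})$ specialises to $j \in \Neg(\pi^{-1}) \setminus \Neg(\sigma^{-1})$, recovering Proposition~\ref{t.criterion}. A preliminary calculation with the group law of Section~\ref{s.prelim} will give
\[
\pi^{-1} = ((-c_{\tau^{-1}(1)}, \ldots, -c_{\tau^{-1}(n)}), \tau^{-1}) \qquad \text{for} \qquad \pi = ((c_1, \ldots, c_n), \tau),
\]
justifying the formula $-c_j(\pi^{-1}) = c_{|\pi^{-1}(j)|}(\pi)$ used in the statement.

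For the forward direction $(\Longrightarrow)$, by transitivity it is enough to verify the right-hand side when $\sigma$ covers $\pi$, and Proposition~\ref{t.cover-r} gives two cases. If $\sigma = \pi b_i$ with $c_i(\pi) < r-1$, then $|\pi| = |\sigma|$, so the $\M$-set is empty, and the only $-c$-coordinate that changes is at $j = |\pi|(i)$, where it strictly \emph{increases} from $c_i(\pi)$ to $c_i(\pi) + 1$; hence no $j$ lies in $\{j : -c_j(\pi^{-1}) > -c_j(\sigma^{-1})\}$. If $\sigma = \pi a_i$ with $c_{i+1}(\pi) = r-1$ and $|\pi|(i) < |\pi|(i+1)$, then a direct computation with the group operation yields $\Inv(|\sigma^{-1}|) = \Inv(|\pi^{-1}|) \cup \{(|\pi|(i), |\pi|(i+1))\}$, and the only $j$ for which $-c_j$ strictly decreases is $j = |\pi|(i+1)$, where it drops from $r-1$ to $0$, matching the singleton $\M$-set.

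For the reverse direction $(\Longleftarrow)$, split on whether $\Inv(|\pi^{-1}|) = \Inv(|\sigma^{-1}|)$. If so, $|\pi| = |\sigma|$, the $\M$-set is empty, and the hypothesis becomes $c_k(\pi) \le c_k(\sigma)$ (as integers in $\{0, \ldots, r-1\}$) for every position $k$; successive applications of the appropriate $b_k$'s ascend from $\pi$ to $\sigma$, each a cover by Proposition~\ref{t.cover-r}(i). Otherwise, pick a saturated chain $|\pi| \lessdot |\pi| s_{i_1} \lessdot \cdots \lessdot |\pi| s_{i_1} \cdots s_{i_k} = |\sigma|$ in the right weak order on $S_n$, set $\pi_0 := \pi$, and define
\[
\pi_j := \pi_{j-1} \, b_{i_j + 1}^{\, r - 1 - c_{i_j + 1}(\pi_{j-1})} \, a_{i_j} \qquad (1 \le j \le k),
\]
where the $b$'s first raise the $(i_j+1)$-st color to $r-1$ so that the subsequent $a_{i_j}$ is a cover. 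Every intermediate multiplication is then a cover, giving $\pi = \pi_0 \preceq \pi_k$.

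The main obstacle will be to show that $\pi_k \preceq \sigma$, which then reduces the problem to the previous case. I would track how $-c_v(\pi_j^{-1})$ evolves along the chain: at step $j$ a direct computation shows that the only value whose $-c$-coordinate is affected is $v = |\pi_{j-1}|(i_j+1)$, for which $-c_v(\pi_j^{-1}) = 0$, while all other $-c$-coordinates are preserved. Iterating and using that each new inversion in $\Inv(|\pi_j^{-1}|) \setminus \Inv(|\pi_{j-1}^{-1}|)$ is the singleton $\{(|\pi_{j-1}|(i_j), |\pi_{j-1}|(i_j+1))\}$, the total set of affected values is exactly $\M\bigl[\Inv(|\sigma^{-1}|) \setminus \Inv(|\pi^{-1}|)\bigr]$, and hence
\[
-c_j(\pi_k^{-1}) = \begin{cases} 0, & j \in \M\bigl[\Inv(|\sigma^{-1}|) \setminus \Inv(|\pi^{-1}|)\bigr], \\ -c_j(\pi^{-1}), & \text{otherwise.} \end{cases}
\]
Combining this with the hypothesis $\{j : -c_j(\pi^{-1}) > -c_j(\sigma^{-1})\} \subseteq \M\bigl[\Inv(|\sigma^{-1}|) \setminus \Inv(|\pi^{-1}|)\bigr]$ yields $-c_j(\pi_k^{-1}) \le -c_j(\sigma^{-1})$ for all $j$; the first case applied to $\pi_k$ and $\sigma$ then gives $\pi_k \preceq \sigma$, completing the proof.
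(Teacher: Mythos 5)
Your proposal is correct and is essentially the proof the paper intends: Section~\ref{wreath} states that the $G(r,n)$ results are proved by the same arguments as in the $B_n$ case, and your argument is precisely the faithful generalization of the proof of Proposition~\ref{t.criterion}, with the $\Neg$-set dichotomy replaced by the linear order on the colors $-c_j(\cdot^{-1})$ and with $\tilde a_j$ replaced by $b_{i_j+1}^{\,r-1-c_{i_j+1}(\pi_{j-1})}a_{i_j}$. The cover analysis, the tracking of the affected values along the chain, and the reduction to the equal-underlying-permutation case all match the paper's scheme and are carried out correctly.
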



\noindent
It follows that all the results of Section~\ref{s.weak} can be generalized
to $G(r,n)$. In particular,

\begin{proposition}\label{t.lattice-r}
The poset $(G(r,n),\preceq)$ is a 
lattice.
\end{proposition}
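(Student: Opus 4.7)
The plan is to imitate the proof of Proposition~\ref{t.lattice} for $B_n$, using Proposition~\ref{t.criterion-r} in place of Proposition~\ref{t.criterion} and replacing the Boolean structure on $\Neg$ by the natural linear order $0<1<\cdots<r-1$ on coordinate values in $\ZZ_r$. I will construct the meet explicitly; the existence of joins will then follow by self-duality (Proposition~\ref{t.basic-r}(ii)), via $\sigma_1\vee\sigma_2 = (\overline{\sigma_1\mu_0}\wedge\overline{\sigma_2\mu_0})\mu_0$ or an equivalent formula.

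Given $\sigma_1,\sigma_2\in G(r,n)$, I set $\tau := |\sigma_1|\wedge_{S_n}|\sigma_2|$, using that $S_n$ is a lattice under the weak order. Keeping the notation $\M(|\pi|,|\sigma|) := \M\!\left[\Inv(|\sigma^{-1}|)\setminus\Inv(|\pi^{-1}|)\right]$, I define $\sigma_\wedge\in G(r,n)$ by $|\sigma_\wedge|:=\tau$ and, for each $j\in[n]$,
\[
-c_j(\sigma_\wedge^{-1}) := \min\bigl(\tilde c_1(j),\tilde c_2(j)\bigr),
\]
where $\tilde c_i(j) := r-1$ if $j\in\M(\tau,|\sigma_i|)$ and $\tilde c_i(j) := -c_j(\sigma_i^{-1})$ otherwise. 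For $r=2$ this reduces to formulas~\eqref{e.meet_absolute}--\eqref{e.meet_Neg} from the $B_n$ proof. Verifying $\sigma_\wedge\preceq\sigma_i$ via Proposition~\ref{t.criterion-r} is then straightforward: the inversion containment is built into the choice of $\tau$, and whenever $-c_j(\sigma_\wedge^{-1})>-c_j(\sigma_i^{-1})$ the minimum forces $\tilde c_i(j)=r-1$, i.e.\ $j\in\M(|\sigma_\wedge|,|\sigma_i|)$.

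For the universal property, I assume $\pi\preceq\sigma_1$ and $\pi\preceq\sigma_2$. The meet property in $S_n$ gives $\Inv(|\pi^{-1}|)\subseteq\Inv(\tau^{-1})=\Inv(|\sigma_\wedge^{-1}|)$, and the nested containments $\Inv(|\pi^{-1}|)\subseteq\Inv(|\sigma_\wedge^{-1}|)\subseteq\Inv(|\sigma_i^{-1}|)$ yield the disjoint decomposition $\M(|\pi|,|\sigma_i|)=\M(|\pi|,|\sigma_\wedge|)\cup\M(|\sigma_\wedge|,|\sigma_i|)$, exactly as in the $B_n$ proof. To check the color condition, fix $j$ with $-c_j(\pi^{-1})>-c_j(\sigma_\wedge^{-1})$ and let $i^*\in\{1,2\}$ achieve the minimum. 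The subcase $j\in\M(|\sigma_\wedge|,|\sigma_{i^*}|)$ is impossible, since then $\tilde c_{i^*}(j)=r-1\geq -c_j(\pi^{-1})$; hence $-c_j(\sigma_\wedge^{-1})=-c_j(\sigma_{i^*}^{-1})$, so $\pi\preceq\sigma_{i^*}$ gives $j\in\M(|\pi|,|\sigma_{i^*}|)$, and the disjoint decomposition plus $j\notin\M(|\sigma_\wedge|,|\sigma_{i^*}|)$ forces $j\in\M(|\pi|,|\sigma_\wedge|)$, as required.

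The main obstacle I anticipate is this last verification, specifically the need to track simultaneously which of $\sigma_1,\sigma_2$ determines $-c_j(\sigma_\wedge^{-1})$ at each coordinate $j$ and to convert the Boolean union-and-intersection reasoning of the $B_n$ proof into a coordinate-wise minimum on $\{0,1,\ldots,r-1\}$. Once the ``cap at $r-1$'' trick for coordinates in $\M(\tau,|\sigma_i|)$ is in place, the argument runs essentially line-for-line as in Proposition~\ref{t.lattice}, and Lemma~\ref{t.meet_join} generalizes verbatim to $G(r,n)$ with $\Neg$/$\Pos$ replaced by the coordinate-wise min/max on $\ZZ_r$.
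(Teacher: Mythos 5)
Your construction is correct and is essentially the proof the paper intends (the paper leaves it to the reader as ``similar'' to Proposition~\ref{t.lattice}, but records the resulting meet formula in Lemma~\ref{t.meet_join-r}(i)): your capped-at-$(r-1)$ coordinatewise minimum is exactly equivalent to the paper's $\min \{-c_j(\sigma^{-1}) : \sigma \in A,\ j \notin M(|A_\wedge|,|\sigma|)\}$ with the convention $\min\emptyset = r-1$, and your verification of the universal property via the index $i^*$ achieving the minimum is sound. The only cosmetic slips are calling the decomposition $\M(|\pi|,|\sigma_i|)=\M(|\pi|,|\sigma_\wedge|)\cup\M(|\sigma_\wedge|,|\sigma_i|)$ ``disjoint'' (the underlying inversion sets are disjoint but their $\M$-images need not be, and your argument only uses the union) and the precise placement of the bars in the self-duality formula for the join, which you already hedge appropriately.
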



\begin{lemma}\label{t.meet_join-r}
Let $A$ be an arbitrary subset of $G(r,n)$.
\begin{itemize}
\item[(i)]
The meet $A_\wedge$ of $A$ in $(G(r,n),\preceq)$ is determined by
\[
|A_\wedge| := \bigwedge_{\sigma\in A} |\sigma|,
\]
where the meet is taken with respect to the (right) weak order on $S_n$, and by
\[
-c_j(A_\wedge^{-1}) := 
\min \{-c_j(\sigma^{-1}) \,:\, \sigma \in A,\,  j \not\in M(|A_\wedge|, |\sigma|)\}\qquad(1 \le j \le n),
\]
where $M(|\pi|, |\sigma|) := \M\left[\Inv(|\sigma^{-1}|) \setminus \Inv(|\pi^{-1}|)\right]$ and
the minimum is taken with respect to the linear order $0 < 1 < \ldots < r-1$ on $\ZZ_r$,
using the convention $\min \emptyset := \max \ZZ_r = r-1$.

\item[(ii)]
The join $A_\vee$ of $A$ in $(G(r,n),\preceq)$ is determined by
\[
|A_\vee| := \bigvee_{\sigma \in A} |\sigma|
\]
and by
\[
-c_j(A_\vee^{-1}) := 
\max \{-c_j(\sigma^{-1}) \,:\, \sigma \in A,\,  j \not\in M(|\sigma|, |A_\vee|)\}\qquad(1 \le j \le n),
\]
using the convention $\max \emptyset := \min \ZZ_r = 0$.

\end{itemize}

\end{lemma}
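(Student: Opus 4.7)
The plan is to mirror the proof of Lemma~\ref{t.meet_join} (the $r=2$ case), using the generalized order criterion of Proposition~\ref{t.criterion-r} in place of Proposition~\ref{t.criterion}, and deducing part~(ii) from part~(i) via the order-reversing involution of Proposition~\ref{t.basic-r}(ii). Throughout I would rely on the additivity
\[
\M(|\pi|,|\sigma|) = \M(|\pi|,|A_\wedge|) \cup \M(|A_\wedge|,|\sigma|),
\]
valid whenever $|\pi|\le|A_\wedge|\le|\sigma|$ in the right weak order on $S_n$; this is the same observation used in the proof of Proposition~\ref{t.lattice}, namely the decomposition of $\Inv(|\sigma^{-1}|)\setminus\Inv(|\pi^{-1}|)$ at an intermediate element, combined with $\M(A\cup B)=\M(A)\cup\M(B)$.

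For part~(i), first I would check that the element $A_\wedge$ defined by the two displayed formulas satisfies $A_\wedge\preceq\sigma$ for every $\sigma\in A$. The inversion containment $\Inv(|A_\wedge^{-1}|)\subseteq\Inv(|\sigma^{-1}|)$ is immediate from the definition of the $S_n$-meet. For the second clause of Proposition~\ref{t.criterion-r} I must show that every $j$ with $-c_j(A_\wedge^{-1})>-c_j(\sigma^{-1})$ lies in $\M(|A_\wedge|,|\sigma|)$; but by construction $-c_j(A_\wedge^{-1})$ is the minimum of $-c_j(\tau^{-1})$ over those $\tau\in A$ satisfying $j\notin\M(|A_\wedge|,|\tau|)$, so strict inequality against $-c_j(\sigma^{-1})$ forces $\sigma$ to be excluded from this set, i.e.\ $j\in\M(|A_\wedge|,|\sigma|)$. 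The empty-minimum convention $\min\emptyset=r-1$ disposes of the degenerate case automatically.

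Next I would establish the universal property: if $\pi\preceq\sigma$ for every $\sigma\in A$, then $\pi\preceq A_\wedge$. The $S_n$-meet gives $\Inv(|\pi^{-1}|)\subseteq\Inv(|A_\wedge^{-1}|)$, placing $|\pi|$ below $|A_\wedge|$ in weak order so that the additivity of $\M$ applies at the intermediate element $|A_\wedge|$. Suppose for contradiction that some $j$ satisfies $-c_j(\pi^{-1})>-c_j(A_\wedge^{-1})$ and $j\notin\M(|\pi|,|A_\wedge|)$. Since $-c_j(\pi^{-1})\le r-1$, the relevant minimum cannot be over the empty set, so it is attained by some $\sigma\in A$ with $j\notin\M(|A_\wedge|,|\sigma|)$ and $-c_j(\sigma^{-1})=-c_j(A_\wedge^{-1})<-c_j(\pi^{-1})$. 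Applying Proposition~\ref{t.criterion-r} to $\pi\preceq\sigma$ then places $j$ in $\M(|\pi|,|\sigma|)=\M(|\pi|,|A_\wedge|)\cup\M(|A_\wedge|,|\sigma|)$, contradicting both exclusions. Hence $\pi\preceq A_\wedge$, completing~(i).

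Part~(ii) will follow by self-duality. Writing $\psi$ for the order-reversing involution $\pi\mapsto\bar\pi\mu_0$ of Proposition~\ref{t.basic-r}(ii), the meet of $\psi(A)$ in $G(r,n)$ maps under $\psi$ to the join of $A$, so $A_\vee=\psi\bigl((\psi(A))_\wedge\bigr)$; translating the formulas of~(i) through $\psi$ converts the $S_n$-meet into the $S_n$-join and replaces each minimum by a maximum, since $c\mapsto -c$ reverses the linear order $0<\cdots<r-1$ on $\ZZ_r$, and it swaps the two arguments of $\M$ because multiplication by $\mu_0$ reverses weak order on $S_n$. The step I expect to be the main obstacle is precisely this translation: one must match the empty-set conventions ($\min\emptyset=r-1$ on the meet side becoming $\max\emptyset=0$ on the join side) and verify that each occurrence of $\M(|A_\wedge|,|\sigma|)$ is transported into $\M(|\sigma|,|A_\vee|)$ and not into its reverse.
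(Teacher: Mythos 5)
Your proposal is correct and follows essentially the route the paper intends: the paper leaves this lemma to the reader as a "similar" generalization of the $B_n$ case, whose meet formula is exactly the construction in the proof of Proposition~\ref{t.lattice} (lower bound plus universal property via the decomposition $\M(|\pi|,|\sigma|)=\M(|\pi|,|A_\wedge|)\cup\M(|A_\wedge|,|\sigma|)$), with joins obtained by the self-duality of Proposition~\ref{t.basic-r}(ii). Your rephrasing of the set-theoretic identity as a min/max argument over the linear order on $\ZZ_r$, and your care with the empty-set conventions and the transport of $\M(|A_\wedge|,|\sigma|)$ to $\M(|\sigma|,|A_\vee|)$ under $\pi\mapsto\bar\pi\mu_0$, are exactly the points that need checking and are handled correctly.
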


\begin{proposition}\label{t.homotopy-type-r}
Suppose that $\pi\prec \sigma$ in $G(r,n)$ and $\finv(\sigma)-\finv(\pi)\ge 2$.
Then the order complex of the open interval $(\pi,\sigma)$ is
homotopy equivalent to the sphere ${\bf S}^{k-2}$
if $\sigma$ is the join of $k$ atoms in the interval $[\pi, \mu_0]$,
and is contractible otherwise.
\end{proposition}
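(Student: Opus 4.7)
The plan is to generalize the proof of Proposition~\ref{t.homotopy-type} (the $B_n$ case), which is itself modeled on~\cite[Theorem 3.2.7]{BB}. The argument has two steps: first, prove a $G(r,n)$-analogue of the injectivity Lemma~\ref{t.injective}, asserting that on any interval $[\pi, \sigma]$ the map $A \mapsto A_\vee$ from subsets of atoms of $[\pi, \sigma]$ into $[\pi, \sigma]$ is injective; second, apply Bj\"orner's crosscut theorem to deduce the homotopy type.

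For the injectivity step, I would split $A$ according to Proposition~\ref{t.cover-r} into $A_1 := A \cap \{\pi a_i : 1 \le i \le n-1\}$ and $A_2 := A \cap \{\pi b_i : 1 \le i \le n\}$. Since $|\pi b_i| = |\pi|$, Lemma~\ref{t.meet_join-r}(ii) gives $|A_\vee| = \bigvee_{\tau \in A_1} |\tau|$ in $S_n$, and classical injectivity for the weak order on $S_n$ (\cite[Lemma 3.2.4(i)]{BB}) then implies that $|A_\vee|$ determines $A_1$. To recover $A_2$, I would examine the color data $-c_j(A_\vee^{-1})$ via the max formula. A direct computation using the group law of $G(r,n)$ shows that an $A_1$-atom $\pi a_i$ (for which $c_{i+1}(\pi) = r-1$) satisfies $-c_j((\pi a_i)^{-1}) = c_{|\pi|^{-1}(j)}(\pi)$ for $j \ne |\pi(i+1)|$ and $-c_{|\pi(i+1)|}((\pi a_i)^{-1}) = 0$, while an $A_2$-atom $\pi b_k$ satisfies $-c_{|\pi(k)|}((\pi b_k)^{-1}) = c_k(\pi) + 1$ and leaves other coordinates unchanged. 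Combined with the structural identity $\M(|\pi|, |A_\vee|) = \{|\pi(i+1)| : \pi a_i \in A_1\}$, obtained from the description of $|A_\vee|$ as $|\pi|$ times the longest element of the appropriate parabolic subgroup of $S_n$, the max formula in Lemma~\ref{t.meet_join-r}(ii) yields $-c_j(A_\vee^{-1}) = 0$ at every $A_1$-position, while at each remaining position $j = |\pi(k)|$ it equals $c_k(\pi) + 1$ if $\pi b_k \in A_2$ and $c_k(\pi)$ otherwise. These values are distinct since $c_k(\pi) \ne r-1$ whenever $\pi b_k$ is a cover, and the non-collision between $A_1$- and $A_2$-positions is guaranteed by the implication $\pi a_i \in A_1 \Longrightarrow c_{i+1}(\pi) = r-1 \Longrightarrow \pi b_{i+1}$ is not a cover, the precise generalization of the observation used in Lemma~\ref{t.injective}.

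With injectivity in hand, the topological conclusion is standard. By Bj\"orner's crosscut theorem, the order complex of $(\pi, \sigma)$ is homotopy equivalent to the simplicial complex $\Delta$ whose vertex set is the set $M$ of atoms of $[\pi, \sigma]$ and whose faces are those $N \subseteq M$ with $N_\vee \prec \sigma$. In the case $\sigma = M_\vee$ (so $\sigma$ is the join of $k := |M|$ atoms in $[\pi, \mu_0]$), injectivity forces $N_\vee \prec \sigma$ for every proper subset $N \subsetneq M$, so $\Delta = \partial \Delta^{k-1} \simeq \mathbf{S}^{k-2}$. Otherwise $M_\vee \prec \sigma$, so $M$ itself is a face of $\Delta$, making $\Delta$ a full simplex and hence contractible.

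The main obstacle is the bookkeeping in the first step, in particular the structural identity $\M(|\pi|, |A_\vee|) = \{|\pi(i+1)| : \pi a_i \in A_1\}$ together with the verification that the max formula at an $A_1$-position really produces $0$ (accounting for the convention $\max \emptyset = 0$ when no atom contributes at that coordinate). For $r = 2$ this reduces to set-theoretic manipulations in $\Neg$ and $\Inv$, as in Lemma~\ref{t.injective}; for general $r$ the same structural argument carries through, now using the linear order on $\ZZ_r$ encoded in Lemma~\ref{t.meet_join-r}(ii).
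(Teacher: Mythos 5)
Your proposal is correct and follows exactly the route the paper intends: the paper leaves this proof to the reader as a generalization of the $B_n$ case, whose own proof is the injectivity Lemma~\ref{t.injective} (join of atoms determines the atom set, via the $A_1$/$A_2$ split) followed by the crosscut argument of \cite[Theorem 3.2.7]{BB}. Your color-tracking computation correctly generalizes the $\Neg$/$\Pos$ bookkeeping of Lemma~\ref{t.injective} to the $\ZZ_r$ setting of Lemma~\ref{t.meet_join-r}(ii), including the key non-collision observation that $\pi b_k$ being a cover forces $c_k(\pi)\ne r-1$ and hence excludes $\pi a_{k-1}$ from $A_1$.
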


\begin{corollary}
For every $\pi, \sigma\in G(r,n)$,
$$
\mu(\pi,\sigma)=\begin{cases}
   (-1)^k,   & \text{ if } \sigma \text{ is a join of } k \text{ atoms in } [\pi, \mu_0];  \\
   0, & \text{ otherwise. }
\end{cases}
$$
\end{corollary}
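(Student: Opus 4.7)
The plan is to deduce the Möbius function directly from the homotopy type of open intervals, via the classical fact (going back to Philip Hall) that for any finite poset $P$ with $\hat 0, \hat 1$,
\[
\mu_P(\hat 0, \hat 1) = \widetilde{\chi}(\Delta(\hat 0, \hat 1)),
\]
where $\Delta(\hat 0, \hat 1)$ is the order complex of the open interval and $\widetilde{\chi}$ denotes reduced Euler characteristic. Since the reduced Euler characteristic is a homotopy invariant, Proposition~\ref{t.homotopy-type-r} will do almost all of the work.

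First I would dispose of the degenerate cases by inspection. If $\pi = \sigma$ then $\sigma$ is the (empty) join of $0$ atoms in $[\pi, \mu_0]$, and $\mu(\pi,\pi) = 1 = (-1)^0$. If $\sigma$ covers $\pi$, i.e. $\finv(\sigma) - \finv(\pi) = 1$, then $\sigma$ itself is a single atom of $[\pi, \mu_0]$, and $\mu(\pi,\sigma) = -1 = (-1)^1$. If $\pi \not\preceq \sigma$, then by convention $\mu(\pi,\sigma) = 0$, and $\sigma$ is not a join of any set of atoms in $[\pi,\mu_0]$, so the formula reads $0$ as well.

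Next I would treat the generic case $\pi \prec \sigma$ with $\finv(\sigma) - \finv(\pi) \ge 2$. Apply Proposition~\ref{t.homotopy-type-r}. If $\sigma$ is the join of $k$ atoms in $[\pi,\mu_0]$ (equivalently, by Lemma~\ref{t.meet_join-r}, the join of $k$ atoms in the interval $[\pi,\sigma]$ itself), then $\Delta(\pi,\sigma) \simeq \mathbf{S}^{k-2}$, so
\[
\mu(\pi,\sigma) = \widetilde{\chi}(\mathbf{S}^{k-2}) = (-1)^{k-2} = (-1)^k.
\]
Otherwise the order complex is contractible, hence has vanishing reduced Euler characteristic, giving $\mu(\pi,\sigma) = 0$.

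There is no real obstacle here; the content is contained in Proposition~\ref{t.homotopy-type-r}, and the only point requiring mild care is the sign bookkeeping (checking that $k \in \{0,1\}$ is consistent with the boundary cases and that $(-1)^{k-2} = (-1)^k$ in the sphere case). One should also confirm that ``join of $k$ atoms in $[\pi,\mu_0]$'' and ``join of $k$ atoms in $[\pi,\sigma]$'' coincide whenever $\sigma$ is in fact expressible as such a join, which follows from the fact (Lemma~\ref{t.meet_join-r}(ii)) that the join in the whole lattice of any subset of $[\pi,\sigma]$ already lies in $[\pi,\sigma]$.
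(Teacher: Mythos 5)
Your proof is correct and takes essentially the same route as the paper, which leaves this argument to the reader with a pointer to the symmetric-group case \cite[Theorem 3.2.7 and Corollary 3.2.8]{BB}: there, as in your write-up, the M\"obius function is read off from the homotopy type of the open interval via Hall's theorem $\mu(\pi,\sigma)=\widetilde{\chi}\left(\Delta(\pi,\sigma)\right)$, using Proposition~\ref{t.homotopy-type-r} for the generic case and direct inspection for the degenerate ones. Your sign bookkeeping $(-1)^{k-2}=(-1)^{k}$ and the identification of joins of atoms in $[\pi,\mu_0]$ with those in $[\pi,\sigma]$ are both in order.
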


\begin{defn}
For every $\pi\in G(r,n)$ let $\wdes(\pi)$ be the number of
elements in $G(r,n)$ which are covered by $\pi$ in the poset
$(G(r,n),\preceq)$.
\end{defn}

Clearly, for $r=1$ $\wdes$ is the standard descent number. For
$r=2$ it coincides with Definition~\ref{df-wdes}.

\begin{proposition}
For every $n$ and $r$,
$$
\sum\limits_{\pi\in G(r,n)} t^{\wdes(\pi)} =
(1+(r-1)t)^n \cdot E_n\left(\frac{rt}{1+(r-1)t}\right)
$$
and
$$
\sum\limits_{\pi\in G(r,n)} q^{\finv(\pi)} t^{\wdes(\pi)} =
\left( 1+[r-1]_q qt \right)^n \cdot S_n \left(q^r, \frac{[r]_q t}{1+[r-1]_q qt}\right).
$$
\end{proposition}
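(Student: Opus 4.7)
The plan mirrors the proof of the corresponding proposition for $B_n$ in Section~6, and the essentially new step is to derive an $r$-ary analogue of Lemma~\ref{wdes}: defining $\N(\pi):=\{i\in[n]:c_i(\pi)\ne 0\}$ (the natural $r$-ary counterpart of $\Neg$), the claim is that
\[
\wdes(\pi)=\#\bigl(\D(|\pi|)\cup \N(\pi)\bigr).
\]
To prove this, I would invert Proposition~\ref{t.cover-r} to describe covers going downward. A brief calculation with the group operation of $G(r,n)$ shows that right multiplication by $b_i^{-1}$ decrements the $i$th color, while right multiplication by $a_i^{-1}$ reverses the action of $a_i$ (sending $c_i(\pi)=0$, $|\pi(i)|>|\pi(i+1)|$ to the situation in case (ii) of Proposition~\ref{t.cover-r} for $\sigma=\pi a_i^{-1}$). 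Thus $\sigma$ is covered by $\pi$ iff either (i) $\sigma=\pi b_i^{-1}$ for some $i$ with $c_i(\pi)\ne 0$, or (ii) $\sigma=\pi a_i^{-1}$ for some $i\in[n-1]$ with $c_i(\pi)=0$ and $|\pi(i)|>|\pi(i+1)|$. These two families are disjoint (the cases $c_i(\pi)\ne 0$ and $c_i(\pi)=0$ are mutually exclusive), of cardinalities $\#\N(\pi)$ and $\#(\D(|\pi|)\setminus \N(\pi))$ respectively, yielding the formula.

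The second step is to exploit the semidirect product decomposition $G(r,n)=\ZZ_r^n\rtimes S_n$, writing each $\pi$ uniquely as $\pi=vu$ with $v:=|\pi|\in S_n$ and $u:=((c_1(\pi),\ldots,c_n(\pi)),\mathrm{id})\in \ZZ_r^n$. With this factorization $\D(|\pi|)=\D(v)$, $\N(\pi)=\N(u)$, and $\finv(\pi)=r\cdot\inv(v)+\n(u)$. The bivariate generating function then splits as
\[
\sum_{\pi\in G(r,n)} q^{\finv(\pi)}t^{\wdes(\pi)}=\sum_{v\in S_n}q^{r\cdot\inv(v)}t^{\#\D(v)}\sum_{u\in \ZZ_r^n}q^{\n(u)}t^{\#(\N(u)\setminus \D(v))},
\]
and for fixed $v$ the inner sum factorizes over the $n$ coordinates: the $i$th coordinate contributes $\sum_{c=0}^{r-1}q^c=[r]_q$ when $i\in \D(v)$, and $1+t\sum_{c=1}^{r-1}q^c=1+[r-1]_q q t$ when $i\notin \D(v)$.

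Pulling out the common factor $(1+[r-1]_q q t)^n$ and recognizing what remains as $S_n\!\left(q^r,\tfrac{[r]_q t}{1+[r-1]_q q t}\right)$ gives the bivariate identity, and specializing to $q=1$ (using $[r]_q\to r$, $[r-1]_q\to r-1$, $S_n(1,t)=E_n(t)$) yields the univariate one. The main obstacle, such as it is, lies entirely in Step~1: the nonstandard group operation in $G(r,n)$ makes the bookkeeping for right multiplication by $b_i^{-1}$ and $a_i^{-1}$ more delicate than in the Coxeter case $r=2$, and one must verify carefully that the conditions of Proposition~\ref{t.cover-r} pull back to the clean conditions $c_i(\pi)\ne 0$ and $c_i(\pi)=0$ stated above. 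Once that is established, the rest of the proof is a routine coordinate-by-coordinate factorization directly parallel to Section~6.
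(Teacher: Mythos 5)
Your proposal is correct and follows essentially the same route as the paper: the paper proves the $r=2$ case explicitly in Section~6 via the lemma $\wdes(\pi)=\#(\D(|\pi|)\cup\Neg(\pi))$ (obtained by inverting the cover description) together with the $\ZZ_2^n\rtimes S_n$ factorization and a coordinate-wise evaluation of the inner sum, and it declares the general-$r$ case ``similar and left to the reader.'' Your Step~1, with $\N(\pi)=\{i: c_i(\pi)\ne 0\}$ in place of $\Neg(\pi)$ and the careful pullback of Proposition~\ref{t.cover-r} under the nonstandard group law, plus the factorization giving $[r]_q$ on descent positions and $1+[r-1]_q qt$ elsewhere, is exactly the intended generalization.
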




\medskip


\section{Final Remarks and Open Problems}

Recall the pseudo-Coxeter moves $(T1)-(T5)$ from Proposition~\ref{t.Tits}.
Consider the graph $\Gamma_n$, whose
vertices are all maximal chains in the flag weak order on $B_n$
and whose edges correspond to these moves.
By Proposition~\ref{t.Tits}, $\Gamma_n$ is connected.

\begin{problem}\label{prob1}
Find the diameter of $\Gamma_n$.
\end{problem}

For a solution of an analogous problem for the classical weak
orders of types $A$ and $B$ see~\cite{Reiner-R}.

\medskip

Following comments of an anonymous referee, it should be noted
that progress toward a solution of Problem~\ref{prob1} may be
obtained by explicit calculation of various poset parameters such
as order dimension and width. Another approach is
a search for symmetries induced by group actions, as
well as recursive poset properties such as supersolvabilty. Such
methods were found useful in similar contexts; see, e.g., 
\cite{TFT1, Reiner-R}.

\medskip

It is now natural to look for a definition of a nicely-behaved
weak order on other complex reflection groups. A key tool may be the discovery of
convenient presentations for kernels of one-dimensional characters.

\medskip

A challenging problem is to find a ``correct" definition of
strong (Bruhat) order 
on wreath products and other complex reflection groups,
having desired properties (such as a nice interval
structure and a subword property) which, hopefully, demonstrate an
interplay with the flag weak order. Such an order may be useful in
developing an appropriate Kazhdan-Lusztig theory.

\medskip

Finally, finding an absolute order on wreath products and other
complex reflection groups 
may provide interesting new 
extensions of the non-crossing partition lattice.

\bigskip

\noindent{\bf Acknowledgements.} The authors thank the anonymous
referees for many helpful comments.

\end{document}